\theoremstyle{plain}
\newtheorem{theorem}{Theorem}[section]
\newtheorem{lemma}[theorem]{Lemma}
\newtheorem{proposition}[theorem]{Proposition}
\newtheorem{corollary}[theorem]{Corollary}
\theoremstyle{definition}
\newtheorem{definition}[theorem]{Definition}
\newtheorem{example}[theorem]{Example}
\theoremstyle{remark}
\newtheorem*{remark}{Remark}
\newcommand{\bd}{\partial}
\newcommand{\C}{\mathbb{C}}
\newcommand{\R}{\mathbb{R}}
\newcommand{\N}{\mathbb{N}}
\newcommand{\psh}{\mathcal{PSH}}
\newcommand{\usc}{\mathcal{USC}}
\newcommand{\lsc}{\mathcal{LSC}}
\newcommand{\suchthat}{\mathrel{;}}
\DeclarePairedDelimiter\abs{\lvert}{\rvert}
\DeclarePairedDelimiter\set{\{}{\}}
\title[Continuity of envelopes of unbounded plurisubharmonic functions]{Continuity of envelopes of \\ unbounded plurisubharmonic functions}
\author{Mårten Nilsson}
\address{Center for Mathematical Sciences\\
  Lund University\\
  Box 118, SE-221 00 Lund, Sweden}
\email{marten.nilsson@math.lth.se}
\subjclass[2010]{Primary 32U05; Secondary 32U15, 31C10, 31C05}
\begin{document}

\maketitle
\thispagestyle{empty}
\begin{abstract}
On bounded B-regular domains, we study envelopes of plurisubharmonic functions bounded from above by a function $\phi$ such that $\phi^*=\phi_*$ on the closure of the domain. For $\phi$ satisfying certain additional criteria limiting its behavior at the singularities, we establish a set where the Perron--Bremermann envelope $P \phi$ is guaranteed to be continuous. This result is a generalization of a classic result in pluripotential theory due to J. B. Walsh. As an application, we show that the complex Monge--Ampère equation
\[
(dd^cu)^n = \mu
\]
being uniquely solvable for continuous boundary data implies that it is also uniquely solvable for a class of boundary values unbounded from above.
\end{abstract}
\section{Introduction}

We say that a plurisubharmonic function $u$ is \textit{quasibounded} if there exists an increasing sequence $\{u_n\}$ of upper bounded, plurisubharmonic functions such that $u_n \nearrow u$. If the convergence only holds outside a pluripolar set, we say that $u$ is \textit{quasibounded quasi-everywhere}. It was established in~\cite{nilsson-wikstrom} that a sufficient condition for the latter is that $u$ has a \textit{tame} majorant, i.e. a non-negative function $f \geq u$ for which there exists a function $\psi\colon [0, +\infty] \to [0,+\infty]$ such that $\psi(+\infty) = +\infty$,
    \begin{equation*}
        \lim_{t\to+\infty} \frac{\psi(t)}{t} = +\infty,
    \end{equation*}
    and $\psi \circ f$ admits a non-trivial plurisuperharmonic majorant. One may equivalently define tame functions in a less straight-forward way by first considering the set
\[
    \mathcal{M} := \big\{ f\colon \Omega \to [0,+\infty] \suchthat \exists u \in -\psh(\Omega), f \le u \big\},
\]
i.e.\ the set of non-negative functions on a bounded domain $\Omega \subset \C^n$ admitting a plurisuperharmonic majorant. On $\mathcal{M}$, one may define a family of operators $S_\lambda: \mathcal{M} \to -\psh(\Omega)$ by
\[
    (S_\lambda f)(z) := \Big(\inf \big\{ v(z) \suchthat v \in -\psh(\Omega), v \ge (f-\lambda)^+\big\}\Big)_*,
\]
and it was proven in \cite{nilsson-wikstrom} that  $S(f):=(\lim_{\lambda \rightarrow \infty} S_\lambda (f))_*\equiv 0$ if and only if $f$ is tame in the above sense. The analogous characterization in the harmonic case was established by Arsove and Leutwiler~\cite{arsove}. In Section 2, we establish a third characterization of tameness, and use this to establish a continuity set for envelopes of the form
    \[
        P\phi(z) = \sup \big\{ u(z) \suchthat u \in \psh(\Omega), u^* \le \phi \big\},
    \]
where $\phi$ is a lower bounded function with $\phi^* = \phi_*$ on $\bar\Omega$, such that the positive part of $\phi$ is tame. Recall that
\[
\phi^*(z) := \limsup_{\Omega \ni w\rightarrow z} \phi(w) \qquad \phi_*(z) := \liminf_{\Omega \ni w\rightarrow z} \phi(w)
\]
for all $z \in \bar \Omega$. In particular, $u^*$ is upper semicontinuous on $\bar \Omega$ for any upper bounded plurisubharmonic function $u$ defined on $\Omega$. As shown in ~\cite{nilsson-wikstrom}, such envelopes may be used to show that the Dirichlet problem
    \[
    \begin{cases}
    u \in \psh(\Omega)\cap L^\infty_\text{loc} (\Omega)  \\
    (dd^cu)^n=0 \\
    u^* \leq \phi \\
        \lim_{\Omega \ni \zeta \rightarrow z_0 \in \bd \Omega } u(\zeta) = \phi(z_0)
    \end{cases}
    \]
is uniquely solvable when $\Omega$ is B-regular and $\phi$ is a tame harmonic function satisfying $\phi^* = \phi_*$. Using similar techniques, we show in Section 3 that a large class of corresponding inhomogeneous Monge--Ampère equations also have unique solutions, under an additional reasonable assumption on $\phi$.   

As harmonicity is preserved when multiplying by $-1$, the original theory of Arsove and Leutwiler may almost verbatim be adapted to negative functions. The corresponding notion of tameness in pluripotential theory, equivalent to the definition above up to sign, has recently been used by Rashkovskii ~\cite{rashkovskii} in the study of singularities of negative plurisubharmonic functions, motivated by the problem of determining when two plurisubharmonic functions may be connected by a geodesic. In the final section of the paper, we therefore investigate the envelope $P\phi$ when $\phi$ is allowed to have negative poles as well. Using the machinery of Jensen measures and a generalization of Edwards' theorem, it is again possible to establish a set where the envelope is continuous. More explicitly, we are able to show that $P\phi$ is continuous on the set
\[
\{z \in \bar \Omega \suchthat v_*(z) \neq -\infty\} \cap \{z \in \bar \Omega \suchthat w^*(z) \neq +\infty\},
\]
where $v, -w \in \psh(\Omega) \cap \usc (\bar \Omega)$ and the combined singularities of $v, w$ in a precise sense surpass those of $\phi$. 

The author would like to thank Frank Wikström and Filippo Bracci for helpful discussions. 
 
\section{Envelopes bounded from below}
We begin with the following definition, which we will refer to throughout the paper.
\begin{definition}\label{majmin}
For a given pair $f:\bar \Omega \rightarrow [-\infty, +\infty]$, $v\in -\psh(\Omega) \cap \lsc (\bar \Omega)$, we say that $v$ is a \textit{strong majorant} to $f$ if
    \begin{s}
      f(z_0)=+\infty \implies v(z_0)=+\infty, \\
      \frac{v(z)}{f(z)} \rightarrow \infty \text{ as }f(z) \rightarrow \infty.
    \end{s}
We say that a function $w$ is a \textit{strong minorant} to $f$ if $-w$ is a strong majorant to $-f$.
\end{definition}
It turns out that for positive functions, having a non-trivial strong majorant is equivalent to being tame.
\begin{lemma}\label{tam-majorant}
Let $f:\bar \Omega \rightarrow [0, +\infty]$. Then the following are equivalent: 
\begin{enumerate}[topsep=10pt,itemsep=1ex,partopsep=1ex,parsep=2ex]
    \item $S(f) = 0$.
    \item There exists a function $\psi\colon [0, +\infty] \to [0,+\infty]$ such that $\psi(+\infty) = +\infty$,
    \begin{equation*}
        \lim_{t\to+\infty} \frac{\psi(t)}{t} = +\infty,
    \end{equation*}
    and $\psi \circ f$ admits a non-trivial plurisuperharmonic majorant.
    \item $f$ has a non-trivial strong majorant.
\end{enumerate}
\end{lemma}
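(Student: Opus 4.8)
The plan is to exploit the equivalence $(1) \Leftrightarrow (2)$ already established in \cite{nilsson-wikstrom}, so that it suffices to fold the new condition $(3)$ into the chain. Concretely, I would prove the cycle $(3) \Rightarrow (2) \Rightarrow (1) \Rightarrow (3)$, invoking the cited result for the middle implication and supplying direct arguments for the other two. Throughout I normalize a given strong majorant to be non-negative: since $v \in -\psh(\Omega) \cap \lsc(\bar\Omega)$ is bounded below on the compact set $\bar\Omega$, replacing $v$ by $v+C$ preserves plurisuperharmonicity, lower semicontinuity, the pole condition, and the growth condition $v/f \to \infty$.

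For $(3) \Rightarrow (2)$, given a non-negative strong majorant $v$, I would set
\[
\psi(t) := \inf \set{ v(z) \suchthat z \in \bar\Omega,\ f(z) \ge t }, \qquad t \in [0, +\infty].
\]
This $\psi$ is non-decreasing, and since $z$ itself lies in the competing set we get $\psi(f(z)) \le v(z)$ at once, so that $v$ is a non-trivial plurisuperharmonic majorant of $\psi \circ f$. It remains to verify the two growth requirements. The identity $\psi(+\infty) = +\infty$ follows from the pole condition, the competing set consisting of points where $v = +\infty$ (or being empty, in which case the infimum is $+\infty$ by convention). For $\lim_{t \to \infty} \psi(t)/t = +\infty$, I would use that whenever $f(z) \ge t$ and $v \ge 0$ one has $v(z)/t \ge v(z)/f(z)$; the growth condition then forces $\psi(t)/t$ to exceed any prescribed $M$ once $t$ is large, which is precisely superlinearity.

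For $(1) \Rightarrow (3)$, I would build the majorant as a series of the operators $S_\lambda$. Since $S(f) = 0$ and $\lambda \mapsto S_\lambda f$ is decreasing, the functions $S_\lambda f$ decrease to $0$ quasi-everywhere; fixing a point $z_0$ where this holds, I can pick $\lambda_k \nearrow +\infty$ so rapidly that $S_{\lambda_k} f(z_0) \le 2^{-k}$, and set $v := \sum_{k} S_{\lambda_k} f$. Summability at $z_0$ guarantees that $v$ is a genuine (non-trivial) plurisuperharmonic function rather than $\equiv +\infty$. The pole condition is immediate because $S_{\lambda_k} f \ge (f-\lambda_k)^+$, so $v = +\infty$ wherever $f = +\infty$. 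The same inequality yields the superlinear growth: if $f(z) = L$, then $v(z) \ge \sum_k (L - \lambda_k)^+$, and the number of indices $k$ with $\lambda_k \le L/2$ tends to infinity with $L$, whence $v(z)/f(z) \to +\infty$ as $f(z) \to \infty$. To land in $\lsc(\bar\Omega)$ I would finally pass to the lower regularization $z \mapsto \liminf_{\Omega \ni w \to z} v(w)$, which agrees with $v$ on $\Omega$ and is lower semicontinuous on $\bar\Omega$.

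The step I expect to be most delicate is precisely this last passage to the closure: the operators $S_\lambda$ produce functions on $\Omega$ with no a priori control at $\bd\Omega$, so I must check that the lower regularization neither destroys the pole and growth conditions at boundary singularities of $f$ nor collapses the non-triviality of the series. Controlling the rate at which $S_{\lambda_k} f \to 0$—so that the series converges to a non-trivial function while still accumulating enough mass to beat $f$ linearly infinitely often—is the quantitative heart of the argument; the remaining verifications are routine once the $\lambda_k$ are fixed.
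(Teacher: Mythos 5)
Your implication $(3) \Rightarrow (2)$ is correct and is genuinely different from what the paper does: the layer-cake function $\psi(t) = \inf\set{v(z) \suchthat f(z)\geq t}$ (after normalizing $v\geq 0$) does satisfy $\psi\circ f \leq v$, and your derivations of $\psi(+\infty)=+\infty$ and of superlinearity from the pole and growth conditions go through. The paper instead closes that side of the cycle by proving $(3)\Rightarrow(1)$ directly: if $v$ is a strong majorant then so is $\eps v$, the growth condition forces $f$ to be bounded by some $k$ on the set $\{\eps v < f\}$, whence $f\leq \eps v + k$, $S(f)\leq \eps S(v)$, and one lets $\eps\to 0$.

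The gap is in your $(1)\Rightarrow(3)$. The inequality $S_{\lambda}f\geq (f-\lambda)^+$, on which both your pole condition and your growth estimate rest, is false in general: $S_\lambda f$ is the \emph{lower regularization} of the infimum, so the inequality holds only outside a pluripolar set, and the points where $f=+\infty$ are exactly the ones liable to fall into the exceptional set. Concretely, take $f=+\infty\cdot\chi_{\{z_1\}}$ for a single interior point $z_1$: then $S_\lambda f\equiv 0$ for every $\lambda$, so your series is identically zero and has no pole at $z_1$, even though $f$ is tame and admits the strong majorant $-\log\abs{z-z_1}+C$. The boundary problem you flag is equally fatal and not merely delicate: the operators $S_\lambda$ only see $f|_\Omega$, so no choice of $\lambda_k$ can force $\liminf_{\Omega\ni w\to z}v(w)=+\infty$ at boundary points where $f=+\infty$, nor control $v/f$ along sequences tending to $\bd\Omega$, yet Definition~\ref{majmin} demands both on all of $\bar\Omega$. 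The detour is also unnecessary: the result cited from \cite{nilsson-wikstrom} is the full equivalence $(1)\Leftrightarrow(2)$, so it suffices to prove $(2)\Rightarrow(3)$, which is immediate --- if $v$ is a non-trivial plurisuperharmonic majorant of $\psi\circ f$ and $k$ is chosen so that $\psi(t)+k>t$ for all $t$, then $v+k$ is a strong majorant of $f$. This is exactly how the paper completes the cycle.
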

\begin{proof}
(3) $\implies$ (1): Let $v$ be a strong majorant to $f$. Then $v_\varepsilon := \varepsilon v$ are strong majorants to $f$ as well, for all $\varepsilon > 0$. Define $C=\{z \suchthat v_\varepsilon(z)<f(z)\}$, and note that $f$ is bounded from above by some constant $k>0$ on $C$, since otherwise we could find a sequence $z_n \in C$ such that $f(z_n) \rightarrow \infty$, reaching a contradiction. It follows that $f \leq v_\varepsilon + k$, and thus
\begin{s}
S(f) &\leq \varepsilon S(v).
\end{s}
Letting $\varepsilon \rightarrow 0$, we get $S(f)=0$.

 (1) $\iff$ (2) $\implies$ (3): Let $v$ be a plurisuperharmonic majorant to $\psi \circ f$, and let $k>0$ be such that $\psi(t)+k > t$. Clearly $v + k$ is a strong majorant to $f$, and as (1) $\iff$ (2) was proven in \cite{nilsson-wikstrom}, the lemma follows.
\end{proof}

Working with condition (3) instead of condition (2), the proof of Theorem 4.4 in \cite{nilsson-wikstrom} may be adapted to establish the following general statement.
\begin{proposition}\label{nedat-begransad}
Let $\Omega$ be a bounded B-regular domain, and let $\phi$ be a function bounded from below such that $\phi^*=\phi_*$ on $\bar \Omega$. Then the envelope
 \[
 P\phi(z) = \sup \set{ u(z) \suchthat u \in \psh(\Omega), u^* \le \phi }
 \]
is continuous on $\{z \in \bar \Omega \suchthat v^*(z) \neq +\infty\}$, where $v$ is a strong majorant to $\phi$.
\end{proposition}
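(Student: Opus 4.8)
The plan is to treat upper and lower semicontinuity separately. Put $G := \{z \in \bar\Omega \suchthat v^*(z) \neq +\infty\}$. Since $v^*$ is upper semicontinuous, $G$ is open in $\bar\Omega$; and since $\phi(z_0) = +\infty$ forces $v(z_0) = +\infty$ (hence $v^*(z_0) = +\infty$) by Definition~\ref{majmin}, the set $G$ contains no pole of $\phi$. Consequently $\phi$ is finite and continuous, hence locally bounded above, on $G$; as $P\phi \le \phi$, the envelope is locally bounded above on $G \cap \Omega$, its upper regularization $(P\phi)^*$ is plurisubharmonic there, and the standard Choquet negligibility argument (using $\phi^* = \phi$) shows that the boundary regularization of $(P\phi)^*$ is still $\le \phi$, so $(P\phi)^*$ is itself admissible and $P\phi = (P\phi)^*$ is upper semicontinuous on $G$. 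It remains to prove lower semicontinuity on $G$.

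After adding constants so that $\phi \ge 0$ and $v \ge 0$ (which shifts $P\phi$ and $v^*$ correspondingly and leaves $G$ unchanged), I would pass to the truncations $\phi_j := \min(\phi, j)$. Each $\phi_j$ is bounded and continuous on $\bar\Omega$ — continuity at the poles holds because $\phi$ is continuous into $(-\infty, +\infty]$ — so the bounded, B-regular version of Walsh's theorem gives $P\phi_j \in C(\bar\Omega)$. From $\phi_j \le \phi$ we obtain $P\phi_j \le P\phi$, so the increasing limit $h := \sup_j P\phi_j$ satisfies $h \le P\phi$ and is lower semicontinuous on $\bar\Omega$. The goal is thereby reduced to the reverse inequality $P\phi \le h$ on $G$.

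This reverse inequality is the crux and the main obstacle: one cannot truncate a competitor from above without destroying plurisubharmonicity, so the full envelope must be recovered from its truncations by a different device, supplied by the strong majorant. Given any competitor $u$ and $\eps \in (0,1)$, consider $u_\eps := (1-\eps)u - \eps v \in \psh(\Omega)$ (recall $-v \in \psh(\Omega)$). The growth condition $v/\phi \to +\infty$ provides, for each $\eps' > 0$, a threshold $T$ with $\phi < \eps' v$ on $\{\phi > T\}$; choosing $\eps'$ with $(1-\eps)\eps' \le \eps$ and estimating $(u_\eps)^* \le (1-\eps)\phi - \eps v$ while using $v \ge 0$, one checks that $(u_\eps)^* \le \phi$ everywhere and $(u_\eps)^* \le (1-\eps)T =: K_\eps$. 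Hence $(u_\eps)^* \le \min(\phi, K_\eps) \le \phi_j$ for every $j \ge K_\eps$, so $u_\eps$ is admissible for $P\phi_j$ and $(1-\eps)u - \eps v = u_\eps \le P\phi_j \le h$ on $\Omega$.

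Evaluating at $z_0 \in G$, where $v(z_0) \le v^*(z_0) < +\infty$, and letting $\eps \to 0$ yields $u(z_0) \le h(z_0)$; taking the supremum over all competitors gives $P\phi \le h$ on $G$, so $P\phi = h = \sup_j P\phi_j$ there. As a supremum of functions continuous on $\bar\Omega$, the right-hand side is lower semicontinuous, and since $G$ is open this makes $P\phi$ lower semicontinuous at each point of $G$; combined with the upper semicontinuity from the first step, $P\phi$ is continuous on $G$. The steps I expect to require the most care are the semicontinuity bookkeeping in $(u_\eps)^* \le (1-\eps)\phi - \eps v$ (passing the upper regularization through the sum, where lower semicontinuity of $v$ is used to replace $v_*$ by $v$) and confirming that the bounded base case $P\phi_j \in C(\bar\Omega)$ is legitimately available for B-regular $\Omega$.
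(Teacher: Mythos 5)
Your lower--semicontinuity argument (the part you call the crux) is correct, and it is essentially the paper's own device in different packaging: the paper perturbs by the strong majorant via $P\phi-\eps v$ and invokes Wikström's theorem to replace $\usc(\bar\Omega)$-competitors by continuous ones, whereas you perturb each competitor as $(1-\eps)u-\eps v$, push it under a truncated envelope $P\phi_j$, and invoke Walsh's theorem for $\phi_j\in C(\bar\Omega)$ on a B-regular domain. Both hinge on exactly the same use of the growth condition $v/\phi\to\infty$ to make the perturbed competitor bounded above, so I would not call this a genuinely different route; it is a legitimate variant.

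The gap is in the upper-semicontinuity step. You establish that $P\phi$ is locally bounded above only on $G\cap\Omega$ and then assert that $(P\phi)^*$ is ``admissible.'' Admissibility means membership in the defining family of the global envelope, i.e.\ plurisubharmonicity on all of $\Omega$ together with $u^*\le\phi$ on $\bar\Omega$; a function that is only defined and psh on $G\cap\Omega$ cannot be fed back into the envelope, and the Choquet negligibility argument by itself only gives $P\phi=(P\phi)^*$ off a pluripolar set, which is not enough for upper semicontinuity at the exceptional points. The missing ingredient is that $P\phi$ is locally bounded above on \emph{all} of $\Omega$, including near interior poles of $\phi$ (which may occur inside $\Omega$, e.g.\ $\phi=\sqrt{-\log\abs{z_1}}$ on the ball with majorant $v=-\log\abs{z_1}$), where the trivial bound $P\phi\le\phi$ is useless. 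The paper supplies this via the strong majorant in a second way: the harmonic functions $h_n$ with boundary data $\min\{\phi,n\}$ are dominated by the superharmonic function $v$, so by Harnack they increase to a finite harmonic $h$; quasiboundedness of each competitor (from $S(\phi)=0$) then gives $u\le h$, hence $P\phi\le h$ is locally bounded, $(P\phi)^*$ is psh on $\Omega$, and Brelot--Cartan makes it a genuine global competitor. Your proof needs this (or an equivalent global bound) inserted before the regularization step; with it, the rest of your argument goes through.
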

\begin{proof}
Avoiding a vacuous truth, we may assume that $v$ is a non-trivial majorant and that $\{z \in \bar \Omega \suchthat v^*(z) \neq +\infty\}$ is non-empty. Since $\phi$ is bounded from below, we may further assume that $v\geq \phi > 0$. Lemma~\ref{tam-majorant} then implies that $S(\phi)=0$, and hence every $u \in \psh(\Omega)$ satisfying $u^* \leq \phi$ is quasibounded quasi-everywhere. On the other hand, the sequence of harmonic functions $\{h_n\}$ defined by $h_n|_{\bd \Omega} = \min\{\phi,n\}$ is also majorized by $v$, therefore converging to a harmonic function $h$ by Harnack's theorem. Clearly $h\geq u$, and it follows from the Brelot--Cartan theorem that $P\phi$ is upper semicontinuous and plurisubharmonic on $\Omega$. 

In order to establish continuity at all points $z$ where $v^*(z) \neq +\infty$, we will now show that
 \[
 P\phi(z) = \sup \set{ u(z) \suchthat u \in \psh(\Omega)\cap C(\bar \Omega), u \le \phi }
 \]
for all such $z$. Continuity will then follow as $\{z \in \bar \Omega \suchthat v^*(z) \neq +\infty\}$ is open, and the pointwise supremum of a family of lower semicontinuous functions on a open set is lower semicontinuous.  By Wikström~\cite[Corollary 4.3]{wikstrom},
\begin{align*}
&\sup \set{ u(z) \suchthat u \in \psh(\Omega)\cap C(\bar \Omega), u \le \phi } \\
&\qquad= \sup \set{ u(z) \suchthat u \in \psh(\Omega)\cap \usc(\bar \Omega), u \le \phi }
\end{align*}
and thus it is enough to find a sequence of upper bounded plurisubharmonic functions approximating $P\phi$ from below. As in the proof of Lemma~\ref{tam-majorant}, pick $k>0$ such that $\varepsilon v+k>\phi \geq P\phi$. Then $u_\varepsilon := P\phi - \varepsilon v$ yields such a sequence as $\varepsilon \rightarrow 0$, since
\begin{s}
  P\phi - \varepsilon v= P\phi - \varepsilon S_0(v) &= P\phi -  S_{k}(\varepsilon v+k)  \\ 
  &\leq \max\{P\phi,0\} -  S_{k}(\max\{P\phi,0\}) \leq k <\infty. \qedhere
\end{s}
\end{proof}
\begin{remark}
Using different methods, it is possible to prove a similar statement without the requirement that $\phi$ is bounded from below. We will return to this question in Section~\ref{sec:obunden}. 
\end{remark}
\section{A Dirichlet problem with unbounded boundary data}
We now turn our attention to the inhomogeneous Dirichlet problem for which the boundary data corresponds to a function on $\bar \Omega$ of the same flavor as in Proposition~\ref{nedat-begransad}. We will restrict our analysis to measures for which the complex Monge--Ampère equation is solvable when considering continuous boundary data. 
\begin{definition}
We say that a measure $\mu$ is \textit{compliant} if
    \[
    \begin{cases}
    u \in \psh(\Omega)\cap L^\infty(\Omega)  \\
    (dd^cu)^n=\mu \\
        \lim_{\Omega \ni \zeta \rightarrow z_0 \in \bd \Omega } u(\zeta) = \phi(z_0)
    \end{cases}
    \]
has a unique solution for all $\phi \in C(\bd \Omega)$. If every such solution is continuous, we say that $\mu$ is \textit{continuously compliant}.
\end{definition}
\begin{remark}
 Ko\l{}odziej \cite{kolod1} proved in particular that the measure $|f|\,dV$ is continuously compliant if $f \in L^p (\Omega)$, where $p>1$ and $\Omega$ is strictly pseudoconvex. Mere compliance of a measure may be shown by applying Ko\l{}odziej's subsolution theorem \cite{kolod2}. Note that the existence of continuously compliant measures implies that $\Omega$ is B-regular. 
\end{remark}

It should be mentioned that the proof of the homogeneous case addressed in \cite{nilsson-wikstrom} only needs minor modifications to encompass boundary values corresponding to a tame \textit{harmonic} function $\phi$, bounded from below such that $\phi^*=\phi_*$ on $\bar \Omega$. Proposition~\ref{nedat-begransad} then implies that the solution is continuous off the singularity set of any strong majorant of $\phi$. In the inhomogeneous case however, we will make an additional technical assumption that there exists a plurisubharmonic function $\xi$ such that $-\xi \geq \phi$ and $\xi$ lies in the Cegrell class $\mathcal{E}(\Omega)$. This class may be defined as all negative plurisubharmonic functions $u$ such that for each $z_0 \in \Omega$, there exists a neighborhood $U_{z_0}$ and decreasing sequence of bounded functions $h_j$ such that $\lim_{z \rightarrow \zeta_0 \in \bd \Omega}h_j(z)=0,$ 
\[
 \sup_j \int (dd^c h_j)^n < \infty,
\]
and $h_j \searrow u$ on $U_{z_0}$. The class $\mathcal{E}(\Omega)$ is in a certain sense the largest domain for which the Monge--Ampère operator is well-behaved, but requires hyperconvexity of the domain $\Omega$ not to be empty. See Cegrell \cite{cegrell} for details.

We will make heavy use of the following result due to Demailly~\cite{demailly}.
\begin{lemma}\label{demailly}
Let $\Omega$ be a domain in $\C^n$, $u,v \in \psh(\Omega)\cap L_{loc}^\infty (\Omega)$. Then
\[
(dd^c \max\{u,v\})^n \geq \chi_{u\geq v}(dd^c u)^n + \chi_{u<v}(dd^c v)^n.
\]
\end{lemma}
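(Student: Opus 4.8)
The plan is to exploit that the Monge--Ampère operator is local on the open sets where $\max\{u,v\}$ coincides with one of the two functions, so that the entire content of the inequality is concentrated on the contact set $\{u=v\}$. First I would note that the statement is local, so we may assume $\Omega$ is a ball and $u,v$ are bounded. On the open set $\{u>v\}$ we have $\max\{u,v\}=u$, and by the locality of the Bedford--Taylor operator $(dd^c\max\{u,v\})^n$ and $(dd^cu)^n$ agree there; symmetrically they agree with $(dd^cv)^n$ on $\{u<v\}$. Since the two measures on the right-hand side are carried by the disjoint Borel sets $\{u\ge v\}$ and $\{u<v\}$, proving the inequality reduces to the single estimate
\[
\chi_{\{u=v\}}(dd^c\max\{u,v\})^n \ge \chi_{\{u=v\}}(dd^cu)^n.
\]

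For the contact set I would pass to the regularized maximum. Fix a smooth convex function $M_\delta$ on $\R^2$, non-decreasing in each variable, with $M_\delta(s,t)=\max\{s,t\}$ whenever $\abs{s-t}\ge\delta$ and $M_\delta(s+c,t+c)=M_\delta(s,t)+c$; the last property forces $\partial_sM_\delta+\partial_tM_\delta\equiv 1$. Then $w_\delta:=M_\delta(u,v)$ is plurisubharmonic with $w_\delta\searrow\max\{u,v\}$ as $\delta\searrow 0$, so by Bedford--Taylor $(dd^cw_\delta)^n\to(dd^c\max\{u,v\})^n$ weakly. To estimate $(dd^cw_\delta)^n$ I would reduce to smooth data by choosing smooth plurisubharmonic $u_j\searrow u$, $v_j\searrow v$ and invoking multilinearity and continuity of the mixed operator. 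In the smooth case a direct computation gives
\[
dd^cw_\delta = (\partial_sM_\delta)\,dd^cu + (\partial_tM_\delta)\,dd^cv + R_\delta,
\]
where $R_\delta$ is the Hessian contribution of $M_\delta$; convexity of $M_\delta$ makes $R_\delta$ a positive $(1,1)$-form. Thus $dd^cw_\delta$ dominates a convex combination of $dd^cu$ and $dd^cv$ plus a nonnegative remainder, all cross terms in the $n$-fold wedge stay nonnegative, and on the region $\{u>v+\delta\}$, where the coefficients are $1$ and $0$, one recovers $(dd^cu)^n$ exactly.

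The main obstacle is to show that, as $\delta\searrow 0$, the mass that $(dd^cw_\delta)^n$ carries in the shrinking transition band $\{\abs{u-v}<\delta\}$ dominates $\chi_{\{u=v\}}(dd^cu)^n$ in the limit, with no loss on the contact set. The difficulty is genuine: $\{u=v\}$ is only plurifine closed and need not be closed in the ordinary topology, so the weak convergence of the measures does not by itself control their mass on this set. I expect the resolution to require testing against the plurifine-open approximations $\{u>v-c\}$ — on which $\max\{u,v-c\}$ still equals $u$, so that locality pins the mass there to $(dd^cu)^n$ — together with the monotone convergence $\max\{u,v-c\}\nearrow\max\{u,v\}$ as $c\searrow 0$. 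Reconciling this plurifine bookkeeping with the weak limit is the technical heart of the argument; once it is in place, combining it with the equalities already established on $\{u>v\}$ and $\{u<v\}$ yields the claimed inequality.
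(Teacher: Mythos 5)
The paper does not prove this lemma; it is quoted directly from Demailly's lecture notes, so your proposal has to be measured against the standard argument. It has two genuine gaps. First, your opening reduction is not correct as stated: $\{u>v\}$ is in general \emph{not} open when $u$ and $v$ are merely plurisubharmonic and locally bounded, since $u-v$ is a difference of two upper semicontinuous functions and is neither upper nor lower semicontinuous; the set is only plurifine open. So you cannot invoke ordinary locality of the Bedford--Taylor operator on it. Establishing $\chi_{\{u>v\}}(dd^c\max\{u,v\})^n=\chi_{\{u>v\}}(dd^cu)^n$ already requires quasicontinuity (removing an open set of small capacity off which $u$ and $v$ are continuous, and using that Monge--Amp\`ere measures of locally bounded plurisubharmonic functions are continuous with respect to capacity). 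You notice this plurifine issue for the contact set $\{u=v\}$ but overlook it for the strict sets, where it is equally present.

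Second, and more seriously, the contact-set estimate --- which you correctly identify as the heart of the matter --- is left unproven: ``I expect the resolution to require \dots once it is in place \dots'' is a statement of intent, not an argument. The regularized maximum $M_\delta$ is a detour the standard proof does not need, and you supply no mechanism for tracking the mass of $(dd^cw_\delta)^n$ in the shrinking band $\{|u-v|<\delta\}$. The efficient route, which you only gesture at in your last paragraph, is: once the inequality is known with $\chi_{\{u>v\}}$ and $\chi_{\{u<v\}}$ only, apply it to the pair $(u,v-\varepsilon)$ to obtain
\[
(dd^c\max\{u,v-\varepsilon\})^n\ \geq\ \chi_{\{u>v-\varepsilon\}}(dd^cu)^n+\chi_{\{u<v-\varepsilon\}}(dd^cv)^n\ \geq\ \chi_{\{u\geq v\}}(dd^cu)^n+\chi_{\{u<v-\varepsilon\}}(dd^cv)^n,
\]
and then let $\varepsilon\searrow 0$: the left side converges weakly to $(dd^c\max\{u,v\})^n$ because $\max\{u,v-\varepsilon\}\nearrow\max\{u,v\}$, while the right side increases to the desired measure; testing against nonnegative continuous functions of compact support and using monotone convergence finishes the proof. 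As written, neither half of your argument is complete.
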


We are now ready to prove the main theorem of this section.
\begin{theorem}\label{thm:dirichlet-problem}
    Let $\Omega \subset \mathbb{C}^n$ be a bounded hyperconvex domain, $\mu$ be a compliant measure, and let $\phi$ be a harmonic function bounded from below such that $\phi^* = \phi_*$ on $\bar\Omega$, $\phi$ has a non-trival strong majorant $\psi$ and there exists $\xi \in \mathcal{E}(\Omega)$ such that $-\xi \geq \phi$. Then the Dirichlet problem
    \[
    \begin{cases}
    u \in \psh(\Omega)\cap L^\infty_\text{loc} (\Omega)  \\
    (dd^cu)^n=\mu \\
    u^* \leq \phi \\
        \lim_{\Omega \ni \zeta \rightarrow z_0 \in \bd \Omega } u(\zeta) = \phi(z_0)
    \end{cases}
    \]
    has a unique solution. If $\mu$ is continuously compliant, then the solution is continuous outside the set $\{z \suchthat \psi^*(z) = + \infty\}$.
\end{theorem}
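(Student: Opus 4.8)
The plan is to solve the Dirichlet problem by an exhaustion-and-approximation argument, obtaining the solution as a monotone limit of solutions to auxiliary problems with \emph{bounded} boundary data, and then establishing continuity via Proposition~\ref{nedat-begransad} together with the comparison estimates available in the Cegrell class. First I would truncate the boundary data by setting $\phi_j := \min\{\phi, j\}$, which is a bounded continuous (indeed harmonic where $\phi$ is finite) function on $\bar\Omega$; since $\mu$ is compliant, the problem with boundary values $\phi_j$ admits a unique solution $u_j \in \psh(\Omega)\cap L^\infty(\Omega)$ with $(dd^c u_j)^n = \mu$ and $u_j|_{\bd\Omega} = \phi_j$. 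By the comparison principle for bounded plurisubharmonic functions, the $u_j$ form an increasing sequence, so $u := (\lim_j u_j)^*$ is a natural candidate for the solution.

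Next I would verify that this limit genuinely solves the stated problem. The key is to control the sequence from above so that $u \not\equiv +\infty$ and so that the boundary condition $\lim_{\zeta\to z_0} u(\zeta) = \phi(z_0)$ holds. Here the majorant structure enters: writing $P\phi$ for the Perron--Bremermann envelope from Proposition~\ref{nedat-begransad} (applied to the lower-bounded $\phi$ with strong majorant $\psi$), one has $u_j \le P\phi$ for every $j$ because $P\phi$ is a plurisubharmonic competitor dominating each $\phi_j$ and the maximal such $u_j$ sits below it; more carefully, the solution of the homogeneous problem majorizes the solution with measure $\mu$ precisely when we can compare, which is where I would invoke the hypothesis $-\xi \ge \phi$ with $\xi \in \mathcal{E}(\Omega)$ to keep the total Monge--Ampère mass finite and to guarantee $u$ lies in a class on which the operator is continuous under increasing limits. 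Monotone convergence of the Monge--Ampère operator along increasing, locally bounded sequences (valid in $\mathcal{E}(\Omega)$) then gives $(dd^c u)^n = \mu$, while the upper envelope $P\phi$, being continuous and equal to $\phi$ on $\bd\Omega$ away from the singular set, pins down the boundary limits by squeezing $u_j$ between a barrier from below and $P\phi$ from above.

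For uniqueness, I would argue that any two solutions $u, \tilde u$ of the problem agree by a comparison-principle argument adapted to $\mathcal{E}(\Omega)$: both have the same Monge--Ampère measure $\mu$ and the same boundary values in the appropriate sense, and the membership $u, \tilde u \le -\xi$ with $\xi \in \mathcal{E}(\Omega)$ places them in a class where the Cegrell comparison principle applies, forcing $u = \tilde u$. The role of the strong majorant $\psi$ is then to localize the singularities: outside $\{z : \psi^*(z) = +\infty\}$ the envelope $P\phi$ is continuous by Proposition~\ref{nedat-begransad}, and when $\mu$ is \emph{continuously} compliant each $u_j$ is continuous on $\bar\Omega$, so the increasing limit $u$ is lower semicontinuous; combined with the upper semicontinuity inherited from $P\phi$ on the set $\{\psi^* \neq +\infty\}$, continuity follows on that set exactly as in the proof of Proposition~\ref{nedat-begransad}.

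The main obstacle I anticipate is the interplay between the two different regularity frameworks: Proposition~\ref{nedat-begransad} and the $S_\lambda$ machinery are designed for the \emph{homogeneous} equation $(dd^c u)^n = 0$, whereas here $\mu$ may be nonzero, so the envelope $P\phi$ need not itself solve the equation and cannot be used directly as the solution. Reconciling the upper bound $u \le P\phi$ (which delivers boundary behavior and the continuity set) with the requirement $(dd^c u)^n = \mu$ (which forces $u$ to be the increasing limit of the $u_j$, \emph{not} the envelope) is the delicate point; the hypothesis $\xi \in \mathcal{E}(\Omega)$ with $-\xi \ge \phi$ is precisely what bridges the gap, since it guarantees finite total mass and admissibility in a Cegrell class where both the convergence of Monge--Ampère measures and the comparison principle hold, allowing the bounded-data solutions and the envelope to be compared on common ground.
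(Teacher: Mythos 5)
Your overall skeleton --- truncate the boundary data, solve the bounded-data problems to get an increasing sequence $v_k$, and pass to the limit --- matches the paper's, and the existence portion (Bedford--Taylor convergence along the increasing sequence, sandwiching for the boundary values) is essentially sound. The genuine gap is in uniqueness, which is the real content of the theorem and the only place the hypotheses on $\psi$ and $\xi$ do any work. Your claim that ``$u, \tilde u \le -\xi$ with $\xi\in\mathcal{E}(\Omega)$ places them in a class where the Cegrell comparison principle applies'' is not correct: the Cegrell classes consist of \emph{negative} plurisubharmonic functions, and an upper bound by the nonnegative function $-\xi$ confers no such membership. More seriously, the comparison principle needs $\liminf(u-\tilde u)\ge 0$ at the boundary, and at boundary points where $\phi=+\infty$ both solutions tend to $+\infty$, so the difference is $\infty-\infty$ and no comparison is available. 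This is precisely the failure mode the tameness machinery repairs: the paper takes as candidate the \emph{maximal} envelope $U$ of locally bounded subsolutions below $\phi$, shows $S(U)=0$ via the strong majorant, sets $w_k=U-S_k(U)$ (this is where $\xi$ enters, guaranteeing $-S_k(U)\in\mathcal{E}(\Omega)$ so that $(dd^cw_k)^n\ge\mu$ via superadditivity), and uses Demailly's inequality together with the domination principle to prove $v_k\ge w_k$, hence $v_k\nearrow U$ quasi-everywhere. Only then does a comparison argument on a relatively compact subdomain (where everything is bounded, because each $v_k$ is bounded above by $k$) force any other solution to dominate every $v_k$ and hence equal $U$. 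Without identifying your minimal candidate $(\lim_k v_k)^*$ with the maximal one $U$, uniqueness is simply not established; the paper's remark that for $\mu=0$ uniqueness is \emph{equivalent} to quasiboundedness of the envelope shows some such argument is unavoidable.

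The continuity claim has the same gap in another guise: $\lim_k v_k$ is lower semicontinuous, but the solution is its upper semicontinuous regularization, and the two agree a priori only off an unidentified pluripolar set. To get continuity on $\{\psi^*\neq+\infty\}$ one must locate that exceptional set inside $\{\psi=+\infty\}$, which the paper does by a second domination-principle argument applied to $\max\bigl\{U+\max\{-\varepsilon\psi,\xi\},v_k\bigr\}$; combining lower semicontinuity of the limit with upper semicontinuity ``inherited from $P\phi$'' does not substitute for this step.
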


\begin{proof}
     A candidate for a solution is the envelope
    \[
    U = \sup \big \{u \suchthat u \in \psh (\Omega)\cap L^\infty_\text{loc} (\Omega) , (dd^cu)^n \geq \mu, u\leq \phi \big \}.
    \]
    Note that the defining family for $U$ is non-empty, since it contains functions $v_k$ satisfying 
    \[
    \begin{cases}
    v_k \in \psh(\Omega)\cap L^\infty(\Omega)  \\
    (dd^c v_k)^n= \mu \\
        \lim_{\Omega \ni \zeta \rightarrow z_0 \in \bd \Omega } v_k(\zeta) = \min\{\phi(z_0), k\}.
    \end{cases}
    \]
    by the compliance of $\mu$. By Choquet's lemma, $U^* = (\sup_k U_k)^*$ for some countable family $\{U_k\}$. Since $\tilde U_k := \max\{U_1,..., U_k\}$ increases to $U^*$ outside a pluripolar set, we conclude that $(dd^cU^*)^n \geq \mu$, and so $U^*=U$. Without loss of generality, assume that $U \geq v_0 > 0$.
    
    By Lemma~\ref{tam-majorant}, $S(U)=0$. Following ~\cite{nilsson-wikstrom}, we set
    \[
    w_k := U - S_k(U),
    \]
    where $w_k$ is bounded from above and $w_k \nearrow U$ quasi-everywhere. Since $-S_k(U) =\max\{-S_k(U),\xi\}$, we may conclude (cf. Cegrell~\cite[Theorem 4.5]{cegrell}) that $-S_k(U)$ is an element of $\mathcal{E}(\Omega)$. It follows from regularization and Minkowski's determinant inequality that $(dd^cw_k)^n \geq \mu$. Now for $m < \min_{z\in \bar \Omega} v_0$ we may write
    \[
    u_k := \max\{v_k, w_k\} =\max\big\{v_k, \max\{w_k,m\}\big\},
    \]
    where
    \[
    (dd^cu_k)^n \geq \chi_{v_k \geq \max\{w_k,m\}} (dd^c v_k)^n +\chi_{\max\{w_k,m\} \geq v_k}\big(dd^c\max\{w_k,m\}\big)^n
    \]
    by Lemma~\ref{demailly}. Letting $m \rightarrow -\infty$, we infer that
    \[
    (dd^cu_k)^n \geq \mu.
    \]
    It follows from the domination principle that $u_k \leq v_k$, and thus $v_k \nearrow U$ quasi-everywhere. We conclude that $(dd^cU)^n = \mu$.
    
     In order to show that the boundary values are attained, fix $z_0 \in \bd \Omega$, and note that
    \begin{align*}
        \min\{\phi(z_0),k\} &= v_k(z_0) =  
        \liminf_{\Omega \ni \zeta \rightarrow z_0} v_k(\zeta) \leq  
        \liminf_{\Omega \ni \zeta \rightarrow z_0} U(\zeta) \\
        &\leq
    \limsup_{\Omega \ni \zeta \rightarrow z_0} U(\zeta) \leq \limsup_{\Omega \ni \zeta \rightarrow z_0} \phi(\zeta) = \phi(z_0).
    \end{align*}
    By letting $k \to \infty$, the above chain of inequalities readily implies that
    \[
    \lim_{\Omega \ni \zeta \rightarrow z_0} U(\zeta) = \phi(z_0).
    \]
    
    The proof of uniqueness is essentially the same as in the proof of Theorem 4.2 in \cite{nilsson-wikstrom}, but we will include it here for completeness. Suppose that $ \tilde u$ is another solution to the Dirichlet problem. Note that $\tilde u \leq U$, as $\tilde u$ lies in the defining family for $U$, with $\tilde u < U$ on some set that is not pluripolar. It follows that there exists an approximant $u_k$ and a constant $C>0$ such that the set  
    \[
    E=\{z \in \Omega : \tilde u + C < u_k(z) \}
    \]
    is non-empty as well, and the boundary values of $\tilde u$ guarantees that $E \Subset \Omega$. Thus we may find an open set $\tilde \Omega$ such that $E \Subset \tilde \Omega \Subset \Omega$ and $\tilde u + C \geq u_k$ on $\bd \tilde \Omega$. Since 
    \[
    (dd^c \tilde u)^n  = \mu \leq (dd^c u_k)^n, 
    \]
    the comparison principle implies that $E$ is empty.
    
    Lastly, assume that $\mu$ is continuously compliant. Since the Monge--Ampère operator is continuous with regards to increasing sequences and $U$ is the unique solution, we may write
    \[
    U = \big(\lim_{k\rightarrow \infty} v_k\big)^*,
    \]
    and it follows from the Brelot--Cartan theorem that $U$ coincides with a lower semicontinuous function outside of a pluripolar set, i.e. continuous outside a pluripolar set $P$. We claim that $P \subset \{z \suchthat \psi(z) = + \infty\}$. As before,  pick $k>0$ such that $\varepsilon \psi+k>\phi \geq U$, and define $\tilde v_k = \max\big\{U + \max\{-\varepsilon \psi,  \xi \}, v_k\big\}$. Then by the proof of Proposition~\ref{nedat-begransad},
    \[
    v_k \leq \tilde v_k \leq \min\{\phi, k\},
    \]
    while Lemma~\ref{demailly} implies that
    \begin{s}
    (dd^c \tilde v_k)^n &\geq  \chi_{\max\{U + \max\{-\varepsilon \psi,\xi\},0\} \geq v_k} (dd^c \max\{U + \max\{-\varepsilon \psi,\xi\},0\})^n \\
    &\qquad + \chi_{\max\{U + \max\{-\varepsilon \psi,\xi\},0\} < v_k} (dd^c v_k)^n \\
    &= \chi_{U + \max\{-\varepsilon \psi,\xi\} \geq v_k} (dd^c U + \max\{-\varepsilon \psi,\xi\})^n \\
    &\qquad + \chi_{\max\{U + \max\{-\varepsilon \psi,\xi\},0\} < v_k} (dd^c v_k)^n \\
    &\geq \mu = (dd^c v_k)^n,
    \end{s}
so by the domination principle, $\tilde v_k=v_k$. It follows that $v_k \nearrow U$ outside $\{z \suchthat \psi(z) = + \infty\}$, and that $U$ is continuous on $\{z \suchthat \psi^*(z) \neq + \infty\}$.
\end{proof}
\begin{remark}
In the general case when $\phi$ is not necessarily tame, the uniqueness argument above shows that any solution must be larger or equal to $(\sup_k v_k)^*$. 
\end{remark}
We stress that having a strong majorant is only a sufficient condition. When $\mu = 0$, it was shown in \cite{nilsson-wikstrom} that uniqueness is equivalent to the envelope above being quasibounded quasi-everywhere. Indeed, tame plurisubharmonic functions are characterized by the following, possibly stronger property:
\begin{theorem}
 Let $0 \leq u \in \psh(\Omega)$. Then $u$ has a tame majorant if and only if there exists an increasing sequence of upper bounded plurisubharmonic functions $u_n\leq u$ such that $(u_n-u)^*$ is plurisubharmonic and $u_n-u \nearrow 0$ quasi-everywhere. 
\end{theorem}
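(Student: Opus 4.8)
\emph{Proof plan.} I would establish the two implications separately; the forward one is a direct construction, while the reverse one carries the real content.

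\textbf{Tame majorant $\Rightarrow$ sequence.} If $u$ has a tame majorant then, by monotonicity of $S$ and Lemma~\ref{tam-majorant}, $S(u)=0$. Following the recipe in the proof of Theorem~\ref{thm:dirichlet-problem}, I would set $u_n:=u-S_n(u)$. Since $S_n(u)\in-\psh(\Omega)$, each $u_n$ is a sum of two plurisubharmonic functions and hence plurisubharmonic; the bound $S_n(u)\ge(u-n)^+$ gives $u_n\le n$, so $u_n$ is bounded above; $\lambda\mapsto S_\lambda$ is decreasing, so $\{u_n\}$ increases; and $S_n(u)\ge0$ gives $u_n\le u$. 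Finally $u_n-u=-S_n(u)$ is plurisubharmonic, so $(u_n-u)^*=u_n-u$, and $S(u)=0$ forces $S_n(u)\searrow0$ quasi-everywhere, i.e.\ $u_n-u\nearrow0$ quasi-everywhere.

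\textbf{Sequence $\Rightarrow$ tame majorant.} Here I would again aim at the criterion $S(u)=0$. Writing $q_n:=-(u_n-u)^*=(u-u_n)_*$, the hypotheses provide a non-negative plurisuperharmonic function with $q_n\searrow0$ quasi-everywhere; set $M_n:=\sup_\Omega u_n$, so that $M_n\to\infty$. The plan is to prove that for each $n$ one has $S_{M_n}(u)\le q_n$ outside a pluripolar set. Granting this, monotonicity of $\lambda\mapsto S_\lambda(u)$ together with $M_n\to\infty$ and $q_n\searrow0$ quasi-everywhere yields $\lim_\lambda S_\lambda(u)=0$ quasi-everywhere, whence $S(u)=0$ and Lemma~\ref{tam-majorant} supplies the tame majorant. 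To obtain the inequality I would pass to the plurisubharmonic side: $-S_{M_n}(u)$ is the regularized largest plurisubharmonic minorant of $\min\{M_n-u,0\}$, so $S_{M_n}(u)\le q_n$ amounts to showing that the plurisubharmonic function $(u_n-u)^*$ is an admissible competitor, i.e.\ $(u_n-u)^*+u\le M_n$. As $(u_n-u)^*+u$ is plurisubharmonic, it is enough to check this inequality almost everywhere, where one may try to replace $(u_n-u)^*$ by $u_n-u$ and use $u_n\le M_n$.

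\textbf{Main obstacle.} The difficulty is exactly this passage from $(u_n-u)^*$ back to $u_n-u$: since $u_n-u$ is a difference of plurisubharmonic functions rather than an upper envelope of a plurisubharmonic family, the Bedford--Taylor negligibility theorem does not apply, and the regularization gap $\{u_n-u<(u_n-u)^*\}$ need not be obviously pluripolar. This is precisely where the hypothesis that $(u_n-u)^*$ is plurisubharmonic---rather than merely $u_n\nearrow u$ quasi-everywhere---must be used. I would first dispose of a degenerate case by the maximum principle: if $(u_n-u)^*$ attains its supremum $0$ at an interior point it is identically $0$, which makes $u$ bounded quasi-everywhere and hence trivially tame, so one may assume $(u_n-u)^*<0$ throughout. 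In the remaining case I expect the gap to be controlled through quasi-continuity of plurisubharmonic functions together with a capacity (or Monge--Amp\`ere) comparison bounding $S_{M_n}(u)$ by $q_n$ up to sets of arbitrarily small capacity. Establishing this comparison is the step that genuinely distinguishes tameness from quasi-boundedness quasi-everywhere, and I expect it to be the hardest part of the argument.
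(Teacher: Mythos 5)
Your forward direction is correct and is essentially the construction that the paper handles by citing \cite[Theorem~3.1]{nilsson-wikstrom}: taking $u_n:=u-S_n(u)$ gives all the stated properties (one should just note first that the tame majorant puts $u\in\mathcal{M}$, so that $S_n(u)$ is defined). No objection there.

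The reverse direction, however, has a genuine gap, and it sits exactly where you flagged it. Your plan needs $(u_n-u)^*+u\le M_n$, i.e.\ that $-(u_n-u)^*$ is an admissible competitor for $S_{M_n}(u)$, and you propose to verify this almost everywhere by replacing $(u_n-u)^*$ with $u_n-u$. But the regularization gap $\{u_n-u<(u_n-u)^*\}$ is only controlled by the discontinuity set $\{u_*<u\}$ of $u$, and for a plurisubharmonic function this set need not be Lebesgue-null, let alone pluripolar: a non-negative function built from $\sum c_k\log\abs{z-q_k}$ with $\{q_k\}$ dense has $u_*\equiv-\infty$ on a set of positive measure where $u$ is finite. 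So neither the a.e.\ reduction nor quasi-continuity closes the step, and the capacity comparison you hope for is left unproved. The paper's argument avoids the issue entirely: it first observes that $(u-u_0)_*+K_0$ is a plurisuperharmonic majorant of $u$, so that $u\in\mathcal{M}$, and then invokes the calculus of $S$ from \cite{nilsson-wikstrom} --- translation invariance and monotonicity --- to get
\[
S(u)=S(u-K_n)\le S(u-u_n)\le u-u_n\rightarrow 0
\]
quasi-everywhere, where $K_n$ is an upper bound for $u_n$. The hypothesis that $(u_n-u)^*$ is plurisubharmonic enters only in the last inequality, through the fact that the non-negative function $u-u_n$ has a plurisuperharmonic regularization and is therefore essentially its own least plurisuperharmonic majorant; no pointwise comparison against the level $M_n$ and no capacity estimate are needed. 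If you want to salvage your route, the missing ingredient is precisely such a lemma ($S(f)\le f$ quasi-everywhere for $f\ge 0$ with $f_*$ plurisuperharmonic), at which point the $S_{M_n}$ bookkeeping becomes superfluous.
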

\begin{proof}
 If $u$ is tame, then such a sequence exists by \cite[Theorem~3.1]{nilsson-wikstrom}. For the other direction, let $K_n \geq 0$ be constants such that $u_n \leq K_n$. Then $(u-u_0)_*+K_0$ is a plurisuperharmonic majorant to $u$, so $u\in \mathcal{M}$. Using the properties of $S$ established in \cite{nilsson-wikstrom},
\[
S(u) =  S(u- K_n )  \leq S(u - u_n) \leq u - u_n  \rightarrow 0
\]
outside a pluripolar set, and it follows that $u$ is tame.
\end{proof}

Assuming that $0 \leq \phi \in -\psh(\Omega)$, one could ask whether $S(\phi)=0$ (i.e. $\phi$ is tame) implies that $-\phi \in \mathcal{E}(\Omega)$, and vice versa. As the following examples show, this is not case.
\begin{example}
    Let $\Omega = \mathbb{B}$ be the unit ball in $\mathbb{C}^2$ and let $\phi(z, w) = (- \log\abs{z})^\alpha$, where $1/2 \leq
    \alpha < 1$. Then $\phi$ is tame by \cite[Corollary~3.2]{nilsson-wikstrom}, but $-\phi \not \in \mathcal{E}(\Omega)$ by Cegrell~\cite[Example~5.9]{cegrell}.
\end{example}
\begin{example}
    Let $\Omega = \mathbb{B}$ be the unit ball in $\mathbb{C}^2$. Define
    \[
        \phi(z,w)=\frac{1-\abs{z}^2}{\abs{1-z}^2}.
    \]
    By \cite{nilsson-wikstrom}, $\phi$ is not quasibounded, and in particular not tame. On the other hand, $-\phi \in \mathcal{E}(\Omega)$ by B\l{}ocki~\cite[Theorem~4.1]{blocki}.
\end{example}

In a recent paper, Rashkovskii~\cite{rashkovskii} established sufficient conditions for $\phi$ to satisfy both these conditions, yielding more examples of tame functions than one may extract from Lemma~\ref{tam-majorant}. Specifically, he introduced the notion of the Green--Poisson residual function $g_{\phi}$ for $\phi \in \psh^-(\Omega)$, defined as
\[
g_{\phi}(z): = \limsup_{x\rightarrow z}
\sup\{v(x) \suchthat v \in \psh^- (\Omega), v \leq \phi + C_v\}.
\]
This construction is very similar to the construction of the operator~$S$, and in fact, for $\phi \in \mathcal{M}\cap -\psh(\Omega)$, 
\[
S(\phi) = -g_{-\phi}.
\]
To see this, note that
\begin{align*}
  (S_\lambda \phi)(z) &= \big(\inf \big\{ v(z) \suchthat v \in -\psh(\Omega), v \ge (\phi-\lambda)^+\big\}\big)_* \\
  &= \big(\inf \big\{ v(z) \suchthat v \in  \mathcal{M}\cap-\psh(\Omega), v \ge (\phi-\lambda)^+\big\}\big)_* \\
  &=-\big(\sup \big\{ v(z) \suchthat v \in \psh^-(\Omega), v \le \min\{-\phi+\lambda,0\} \big\}\big)^* \\
  &=-\big(\sup \big\{ v(z) \suchthat v \in \psh^-(\Omega), v \le -\phi+\lambda \big\}\big)^*.
\end{align*}
Letting $\lambda \rightarrow \infty$, we encompass all negative plurisubharmonic functions $v$ for which there exists a constant $C_v$ with $v \leq -\phi + C_v$.

Using Corollary~6.10 in \cite{rashkovskii} and Lemma ~\ref{tam-majorant}, we immediately obtain the following corollaries. Here, $\mathcal{E}^a(\Omega) \subset \mathcal{E}(\Omega)$ denotes the subset of functions whose Monge--Ampère measure does not charge pluripolar sets. 
\begin{corollary}
Let $\Omega, \Omega'$ be two hyperconvex domains such that $\Omega \Subset \Omega'$. Then every function having a plurisuperharmonic majorant $\phi$ with $-\phi \in \mathcal{E}(\Omega')\cap\mathcal{E}^a(\Omega)$ is tame.
\end{corollary}
\begin{corollary}\label{tam-minorant} Let $\Omega$ be a bounded domain, and let $\phi \in \psh^-(\Omega)$. Then $g_\phi = 0$ if and only if $\phi$ has a non-trivial strong minorant.
\end{corollary}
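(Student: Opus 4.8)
The plan is to deduce the corollary from Lemma~\ref{tam-majorant} by means of the identity $S(\phi) = -g_{-\phi}$ just recorded for $\phi \in \mathcal{M} \cap -\psh(\Omega)$, combined with the built-in sign symmetry of Definition~\ref{majmin}. First I would set $f := -\phi$. Since $\phi \in \psh^-(\Omega)$, the function $f$ is non-negative and plurisuperharmonic, so it serves as its own plurisuperharmonic majorant and therefore $f \in \mathcal{M} \cap -\psh(\Omega)$. This places $f$ in the domain where the identity is valid, and substituting $f$ for $\phi$ gives
\[
S(-\phi) = S(f) = -g_{-f} = -g_\phi,
\]
so that $g_\phi = 0$ if and only if $S(-\phi) = 0$.

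Next I would apply Lemma~\ref{tam-majorant} to the non-negative function $-\phi$: its equivalence (1)$\iff$(3) yields that $S(-\phi) = 0$ if and only if $-\phi$ has a non-trivial strong majorant. I would then translate this back through Definition~\ref{majmin}, where $w$ is declared a strong minorant to $\phi$ exactly when $-w$ is a strong majorant to $-\phi$; since $w$ is non-trivial precisely when $-w$ is, it follows that $\phi$ has a non-trivial strong minorant if and only if $-\phi$ has a non-trivial strong majorant. Chaining these three equivalences produces the desired statement.

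The argument is largely bookkeeping, so the main point requiring attention is the first step: one must verify that $-\phi$ really belongs to $\mathcal{M} \cap -\psh(\Omega)$, the precise domain on which $S(\phi) = -g_{-\phi}$ was derived, and keep the competing sign conventions consistent --- the minorant/majorant duality of Definition~\ref{majmin} against the direction of the inequality $v \leq \phi + C_v$ in the definition of $g_\phi$. Because $-\phi$ is manifestly its own plurisuperharmonic majorant, this verification is immediate, and no genuine obstacle remains.
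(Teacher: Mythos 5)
Your argument is correct and is precisely the one the paper intends: the corollary is stated as an immediate consequence of the identity $S(\phi)=-g_{-\phi}$ on $\mathcal{M}\cap-\psh(\Omega)$ together with the equivalence (1)$\iff$(3) of Lemma~\ref{tam-majorant}, and your three-step chain (applicability of the identity to $-\phi$, the lemma, and the sign duality in Definition~\ref{majmin}) fills in exactly the bookkeeping the paper leaves implicit.
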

\section{Unbounded envelopes}\label{sec:obunden}
As Lemma ~\ref{tam-majorant} and Corollary ~\ref{tam-minorant} shows, the language of strong majorants and minorants is appropriate when limiting the behavior of positive and negative singularities all at once, for a function $\phi:\bar \Omega \rightarrow [-\infty, +\infty]$. In this section, we expand our study to functions of this sort. We begin by establishing a preliminary result of independent interest.

Let $X$ be a compact metric space, and let $\mathcal{F}$ be a cone of upper bounded, upper semicontinuous functions on $X$, not necessarily containing zero. For each $x \in X$, we associate a set of positive measures
\[
M_x^{\mathcal{F}}= \big\{\mu\suchthat u(x)\leq \int u \,d\mu \text{ for all } u \in \mathcal{F} \big\}.
\]
Further, for a measurable function $g$ on $X$, we define
\begin{s}
S_x(g) &= \begin{cases}
\sup\{u(x) \suchthat u\in \mathcal{F}, u \leq g\}, & \hfill \text{if }\exists u\in \mathcal{F} \suchthat u \leq g, \\
- \infty, &\hfill \text{otherwise,}
\end{cases}
\\
 I_x(g)&=\inf\big\{\int g \, d\mu \suchthat \mu \in M_x^{\mathcal{F}}\big\}.
\end{s}
It is clear that $S_x(g)\leq I_x(g)$, since for all $u$ in the defining envelope of $S_x(g)$, and all $\mu$ in the defining envelope of $I_x(g)$,  we have
\[
u(x) \leq \int u \, d\mu \leq \int g \, d\mu.
\]
Edwards' theorem \cite{edwards} states the following:

\begin{theorem} If $\mathcal{F}$ contains all constants and $g$ is bounded and lower semicontinuous, then $Sg=Ig$. 
\end{theorem}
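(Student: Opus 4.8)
The inequality $S_x(g) \le I_x(g)$ is already established, so the plan is to prove the reverse inequality $I_x(g) \le S_x(g)$ at each fixed $x \in X$. First I would reduce to the case where $g$ is continuous. Since $X$ is a compact metric space and $g$ is bounded and lower semicontinuous, $g$ is the pointwise limit of an increasing sequence of (Lipschitz) continuous functions $f_k \nearrow g$, e.g.\ via the inf-convolutions $f_k(y) = \inf_{z}(g(z) + k\,d(y,z))$. Because $\mathcal{F}$ contains the constants $\pm 1$, every $\mu \in M_x^{\mathcal{F}}$ satisfies $\mu(X) = 1$, so $M_x^{\mathcal{F}}$ is a weak-$*$ closed set of probability measures, hence weak-$*$ compact. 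I would then argue $I_x(f_k) \nearrow I_x(g)$: each $I_x(f_k) = \min_{\mu} \int f_k\,d\mu$ is attained at some $\mu_k$ by continuity of $\mu \mapsto \int f_k\,d\mu$ and compactness, a subsequence of $\{\mu_k\}$ converges weak-$*$ to some $\mu_0$, and monotone convergence applied to $\mu_0$ forces $\int g\,d\mu_0 \le \lim_k I_x(f_k)$, giving the nontrivial inequality $I_x(g) \le \lim_k I_x(f_k)$. Granting the continuous case $S_x(f_k) = I_x(f_k)$ and the trivial bound $S_x(f_k) \le S_x(g)$, one obtains $I_x(g) = \lim_k I_x(f_k) = \lim_k S_x(f_k) \le S_x(g) \le I_x(g)$, closing the loop.

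The heart of the matter is the continuous case, which I would treat by Hahn--Banach duality. Fix $f \in C(X)$ and set $q(h) := S_x(h)$ for $h \in C(X)$. Using that $\mathcal{F}$ is a convex cone, $q$ is superlinear (positively homogeneous and superadditive), monotone, and $q(c) = c$ on constants; in particular $q(0) = 0$ and $q(h) + q(-h) \le 0$. Thus $-q$ is sublinear, and I would define a linear functional on the line $\R f$ dominated by $-q$ and extend it by Hahn--Banach to a linear $\ell$ on $C(X)$ with $\ell \le -q$. Setting $L := -\ell$ yields a linear $L \ge q$ with $L(f) = q(f) = S_x(f)$. Positivity of $L$ follows since $q(h) \ge 0$ whenever $h \ge 0$ (as $0 \in \mathcal{F}$ and $0 \le h$), so $L(h) \ge q(h) \ge 0$; by the Riesz representation theorem $L(h) = \int h\,d\mu$ for a positive measure $\mu$, and $L(1) = q(1) = 1$ makes $\mu$ a probability measure.

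It then remains to verify $\mu \in M_x^{\mathcal{F}}$ and to conclude. For any $u \in \mathcal{F}$ and any continuous $h \ge u$ one has $\int h\,d\mu = L(h) \ge q(h) \ge u(x)$; since $u$ is upper semicontinuous and bounded above, it is the decreasing limit of continuous functions, so monotone convergence gives $\int u\,d\mu = \inf_{h \ge u}\int h\,d\mu \ge u(x)$, i.e.\ $\mu \in M_x^{\mathcal{F}}$. Consequently $I_x(f) \le \int f\,d\mu = S_x(f)$, which combined with $S_x(f) \le I_x(f)$ proves $S_x(f) = I_x(f)$ for continuous $f$, exactly what the reduction needs.

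I expect the main obstacle to be the Hahn--Banach step together with the passage from continuous test functions to the upper semicontinuous members of $\mathcal{F}$ when checking $\mu \in M_x^{\mathcal{F}}$; the other delicate point is the compactness/min--max argument guaranteeing $I_x(f_k) \nearrow I_x(g)$, where the finiteness (indeed, the probability normalization) of the Jensen measures, forced by the presence of constants in $\mathcal{F}$, is essential.
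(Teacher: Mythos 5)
Your proposal is correct and follows essentially the same route as the paper's own argument (given there for the generalized Theorem~\ref{edwards}): Hahn--Banach extension of a linear functional dominated by the sublinear map $-S_x$ on the span of a continuous $f$, positivity and Riesz representation to produce a measure in $M_x^{\mathcal{F}}$, then an increasing continuous approximation $f_k \nearrow g$ combined with weak$^*$ compactness of the (here, probability) measures and monotone convergence. The only cosmetic difference is that you pass to minimizers of $I_x(f_k)$ while the paper reuses the representing measures from the continuous case, and you phrase the limiting step as an upper bound on $I_x(g)$ rather than a lower bound on $S_x(g)$; these are the same argument.
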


It is possible to modify Edwards' proof to prove a more general statement, valid for cones not containing constants and for $g$ not bounded from above. To this end, we consider the set of measures
\[
C_x^{\mathcal{F}}:= \big\{\mu\suchthat \mu(X) \leq -S_x(-\vmathbb{1}) \big \} \cap M_x^{\mathcal{F}},
\]
where $-\vmathbb{1}(x)\equiv -1$. It follows from the Banach–Alaoglu theorem and the fact that upper semicontinuous functions may be approximated from above by continuous functions that $C_x^{\mathcal{F}}$ is weak$^*$-compact for all $x\in X$ such that $S_x(-\vmathbb{1})>-\infty$. Letting these sets carry the compactness argument of the original proof, we obtain the following reformulation of Edwards' theorem.

\begin{theorem}\label{edwards} Let $g$ be lower semicontinuous and bounded from below. For all $x\in X$ such that $S_x(-\vmathbb{1})>-\infty,$ we have $S_x(g)=I_x(g)$. It is enough to take the infinum over measures in $C_x^{\mathcal{F}}.$
\end{theorem}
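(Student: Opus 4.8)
The plan is to adapt Edwards' original duality argument, replacing the compactness of the full set $M_x^{\mathcal{F}}$ (which Edwards obtains from the constants being in $\mathcal{F}$, forcing $\mu(X)=1$) with the weak$^*$-compactness of the truncated set $C_x^{\mathcal{F}}$ that the excerpt has just established. Fix $x \in X$ with $S_x(-\vmathbb{1}) > -\infty$; this is precisely the hypothesis guaranteeing $C_x^{\mathcal{F}} \neq \emptyset$ and weak$^*$-compact. The inequality $S_x(g) \leq I_x(g)$ is already proven (for the infimum over all of $M_x^{\mathcal{F}}$, hence a fortiori over the subset $C_x^{\mathcal{F}}$, which only raises the infimum), so the entire content is the reverse inequality $I_x(g) \leq S_x(g)$, and simultaneously the claim that the infimum defining $I_x$ is attained within $C_x^{\mathcal{F}}$.

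First I would reduce to the attainment/compactness step. Since $g$ is lower semicontinuous and bounded from below, write $g = \sup_j g_j$ as an increasing limit of continuous functions $g_j$ bounded below by the same constant. For a measure $\mu \in C_x^{\mathcal{F}}$, the functional $\mu \mapsto \int g \, d\mu = \sup_j \int g_j \, d\mu$ is a supremum of weak$^*$-continuous functionals, hence lower semicontinuous on the weak$^*$-compact set $C_x^{\mathcal{F}}$, so it attains its infimum at some $\mu_0 \in C_x^{\mathcal{F}}$. This already secures the final sentence of the statement once the equality is shown; what remains is to prove $\int g \, d\mu_0 \leq S_x(g)$, or equivalently that no measure in $C_x^{\mathcal{F}}$ can have $g$-integral strictly exceeding $S_x(g)$.

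The core of the proof is a Hahn--Banach separation exactly as in Edwards, carried out on the space $C(X)$ with the weak$^*$-topology on its dual. Suppose for contradiction that $I_x(g) > S_x(g)$, so there is a real number $c$ with $S_x(g) < c < I_x(g)$. Consider in $C(X) \times \R$ the convex set generated by the subgraphs of functions in the cone $\mathcal{F}$ together with the translation data encoding the point evaluation at $x$, and separate it from the point corresponding to $(g, c)$; here one approximates the lower semicontinuous $g$ from below by continuous functions so that the separating functional extends to a genuine measure. The separating continuous linear functional on $C(X) \times \R$ produces a measure $\nu$ and a scalar; the defining inequality $u(x) \leq \int u \, d\nu$ for all $u \in \mathcal{F}$ forces $\nu \in M_x^{\mathcal{F}}$, while testing against $u = -\vmathbb{1} \in \mathcal{F}$ (or rather the element realizing $S_x(-\vmathbb{1})$) yields the mass bound $\nu(X) \leq -S_x(-\vmathbb{1})$, placing $\nu \in C_x^{\mathcal{F}}$. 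This $\nu$ then satisfies $\int g \, d\nu \leq c < I_x(g)$, contradicting the definition of $I_x(g)$ as an infimum over $C_x^{\mathcal{F}}$.

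The main obstacle is managing the cone not containing the constants: in Edwards' setting normalization of mass is automatic and the homogeneity of the cone interacts cleanly with the separation, whereas here I must verify that the truncation $\mu(X) \leq -S_x(-\vmathbb{1})$ is both preserved by the separation argument and genuinely delivered by testing the functional against the distinguished function $-\vmathbb{1}$. Concretely, the delicate point is checking that the separating functional is positive (so that it is represented by a genuine positive measure rather than a signed one) and that its total mass lands in the admissible range; this is where the condition $S_x(-\vmathbb{1}) > -\infty$ is essential, as it is exactly what bounds the mass and keeps $C_x^{\mathcal{F}}$ nonempty and compact. Once positivity and the mass bound are secured, the remainder is a routine transcription of the classical argument, with the limiting passage $g = \sup_j g_j$ handling the unboundedness of $g$ from above via monotone convergence of the integrals.
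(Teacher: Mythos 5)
Your overall architecture matches the paper's: a Hahn--Banach argument on $C(X)$ is to produce a representing measure lying in $C_x^{\mathcal{F}}$, the hypothesis $S_x(-\vmathbb{1})>-\infty$ is what makes the relevant quantities finite and keeps $C_x^{\mathcal{F}}$ nonempty and weak$^*$-compact, and the passage from continuous to lower semicontinuous $g$ goes by monotone approximation from below combined with a weak$^*$ limit of the associated measures. Your first and last paragraphs (lower semicontinuity of $\mu\mapsto\int g\,d\mu$ on the compact set $C_x^{\mathcal{F}}$, monotone convergence in the limiting passage) are sound and correspond to Step 2 of the paper's proof.

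The gap is that the central step --- actually producing the measure for continuous $g$ --- is never carried out. You propose a separation argument in $C(X)\times\R$, but the convex set to be separated is not defined (``the convex set generated by the subgraphs of functions in the cone $\mathcal{F}$ together with the translation data encoding the point evaluation at $x$'' is not a construction; note also that elements of $\mathcal{F}$ are only upper semicontinuous, so their subgraphs do not sit in $C(X)\times\R$ without modification), you do not check that one of the two sets has nonempty interior so that separation applies, and you explicitly defer the two verifications that constitute the proof: positivity of the separating functional and the mass bound $\nu(X)\le -S_x(-\vmathbb{1})$. Writing ``once positivity and the mass bound are secured, the remainder is routine'' leaves exactly the content of the theorem unproved. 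The paper's route, via the extension form of Hahn--Banach, makes these points essentially automatic and is worth internalizing: since any $f\in C(X)$ satisfies $f\ge -K$ for some $K>0$, every $u\in\mathcal{F}$ with $u\le -K\vmathbb{1}$ competes for $S_x(f)$, so $S_x(f)\ge K\,S_x(-\vmathbb{1})>-\infty$ and $-S_x$ is a finite sublinear functional on all of $C(X)$; one dominates the linear functional $kg\mapsto kS_x(g)$ on the line $\R g$ by $-S_x$ and extends to $\tilde H_x\ge S_x$ on $C(X)$. Positivity then follows from $\tilde H_x(h)\ge S_x(h)\ge \varepsilon S_x(-\vmathbb{1})$ for $h\ge 0$ after letting $\varepsilon\to 0$, and the mass bound from $-\mu_g(X)=\tilde H_x(-\vmathbb{1})\ge S_x(-\vmathbb{1})$. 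A separation argument can be made to work (e.g.\ by separating the open convex hypograph $\{(f,t)\suchthat t<S_x(f)\}$, whose openness again uses $S_x(-\vmathbb{1})>-\infty$, from the point $(g,c)$), but you must supply the analogues of these three computations; as written, the proposal identifies where the difficulty lies without resolving it.
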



\begin{proof}
\textit{Step 1. } We first consider the case when $g \in C(X)$. Since $g$ is bounded from below by say $-K$, we have
\[
\infty > g(x) \geq S_x(g) \geq K \cdot S_x(-\vmathbb{1}) > -\infty.
\]
In other words, the map $S_x$ on measurable functions defined by $g \mapsto S_x(g)$ maps $C(X)$ into $\R$. Furthermore, $-S_x$ is sublinear:
\begin{s}
i)& -S_x(\alpha g) = -\alpha S_x(g),\ \alpha \geq 0, \\
ii)& -S_x(g_1 + g_2) \leq -S_x(g_1) - S_x(g_2).
\end{s}

On the subspace of $C(X)$ generated by $g$, we now define the linear function $-H_x(k g):=-kS_x(g)$, where $k \in \R$. Now note that for $k\geq 0$, 
\[
-S_x(kg)=-H_x(k),
\]
and using $ii)$, we have
\[
 0 = -S_x(kg-kg) \leq -S_x(kg)-S_x(-kg)=H_x(-kg)-S_x(-kg),
 \]
implying $-H_x(-kg)\leq-S_x(-kg)$. It follows that $-H_x \leq -S_x$ on the whole span generated by $g$, and by the Hahn-Banach theorem, we may extend $-H_x$ to a linear map $-\tilde H_x$ on $C(X)$, dominated by $-S_x$. 

For $h \in C(X), h\geq 0,$ we have
\[
\tilde H_x(h) \geq S_x(h) > \varepsilon S_x(-\vmathbb{1}) > -\infty,
\]
and letting $\varepsilon \rightarrow 0$, $\tilde H_x(h) \geq 0$. The Riesz representation theorem yields a measure representation of $\tilde H_x$, which we denote by $\mu_g$. Clearly $\mu_g \in C_x^{\mathcal{F}} \subset M_x^{\mathcal{F}}$, so $S_x(g)$ and $I_x(g)$ do indeed coincide. 

\textit{Step 2.} If $g$ is merely lower semicontinuous and bounded from below, note that $S_x(g)=I_x(g)$ if $S_x(g)=\infty$, so assume $S_x(g)<\infty$. Now, let $g_i \in C(X)$ be an increasing sequence such that $g_i \nearrow g$. Further, associate to each $g_i$ a measure $\mu_{g_i}$ as above and extract a subsequence  $\{\mu_{g_{i_n}}\}$ converging to $\mu_g$ in the weak$^*$-topology. Fixing $k\in \N$, we have
\begin{s}
  S_x(g) &\geq \lim_{n\rightarrow \infty} Sg_{i_n}(x) = \lim_{n\rightarrow \infty} Ig_{i_n}(x) =\lim_{n \rightarrow \infty}\int g_{i_n} \, d\mu_{g_{i_n}} \\
  &\geq \lim_{n\rightarrow \infty}\int g_k \, d\mu_{g_{i_n}} = \int g_k \, d\mu_{g}. 
\end{s}
Now take a constant $K$ such that $g_k+K>0$ for all $k$, and write
\[
\int g_k \, d\mu_{g}= \int (g_k+K) \, d\mu_{g} - \int K \, d\mu_{g}
\]
and apply the monotone convergence theorem to the sequence $\{g_k+K\}$ in order to conclude $S_x(g) \geq I_x(g)$, and thus $S_x(g) = I_x(g)$. 
\end{proof}
\begin{corollary}
If $\mathcal{F}$ contains negative constants and $g$ is lower semicontinuous, then $Sg=Ig$.
\end{corollary}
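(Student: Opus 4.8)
The plan is to deduce this corollary directly from Theorem~\ref{edwards} by checking that the extra hypothesis---that $\mathcal{F}$ contains negative constants---forces the quantity $S_x(-\vmathbb{1})$ to be finite at every point, so that the conclusion of the theorem holds \emph{for all} $x\in X$ rather than merely on the set $\{x \suchthat S_x(-\vmathbb{1})>-\infty\}$. First I would observe that if $-c\in\mathcal{F}$ for some constant $c>0$, then $-c$ is a competitor in the defining family for $S_x(-\vmathbb{1})$ precisely when $-c\le -\vmathbb{1}$, i.e. when $c\ge 1$; taking any such constant gives $S_x(-\vmathbb{1})\ge -c>-\infty$ uniformly in $x$. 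Thus the hypothesis of Theorem~\ref{edwards} is met everywhere.

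Next I would address the weakening of the assumption on $g$: the corollary asks only that $g$ be lower semicontinuous, dropping the boundedness-from-below requirement of Theorem~\ref{edwards}. Here the point is that a lower semicontinuous function on a \emph{compact} metric space $X$ automatically attains its infimum and is therefore bounded from below, so no genuine extension is needed---$g$ lower semicontinuous on compact $X$ already implies $g$ bounded from below. With both hypotheses of Theorem~\ref{edwards} verified at every $x\in X$, the equality $S_x(g)=I_x(g)$ holds pointwise for all $x$, which is exactly the assertion $Sg=Ig$.

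I would then note as a sanity check that since $\mathcal{F}$ is a cone containing the negative constant $-c$, it contains $-\lambda c$ for all $\lambda\ge 0$, hence all negative constants; this is consistent with the statement and ensures the defining family for each $S_x$ is nonempty whenever $g$ is bounded below, so the ``$-\infty$ otherwise'' branch never arises. The main (and essentially only) obstacle is conceptual rather than technical: one must correctly read off that the single side condition $S_x(-\vmathbb{1})>-\infty$ appearing in Theorem~\ref{edwards} is automatically satisfied under the cone-contains-negative-constants hypothesis, and that compactness of $X$ supplies the missing lower bound on $g$. Once these two trivial observations are in place, the corollary is immediate, and no compactness or measure-theoretic argument beyond what is already packaged in Theorem~\ref{edwards} is required.
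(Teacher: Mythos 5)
Your proof is correct and is precisely the argument the paper leaves implicit: the corollary is stated without proof as an immediate consequence of Theorem~\ref{edwards}, via exactly your two observations (the cone property makes $S_x(-\vmathbb{1})=-1>-\infty$ at every $x$, and compactness of $X$ supplies the lower bound on $g$). The only caveat is that the compactness step requires $g$ not to take the value $-\infty$; under the usual convention that lower semicontinuous functions map into $(-\infty,+\infty]$ this is automatic and is evidently what the author intends.
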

\textit{Remark.} The first step in the proof above does not require that the cone consists of upper semicontinuous functions. Also note that $S_x(g)>-\infty$ for all $g \in C(X)$ if and only if $S_x(-\vmathbb{1})>-\infty$, so the assumptions of the theorem cannot be weakened whilst still using the proof idea of Edwards (viewing $S_x$ as a supralinear operator on $C(X)$).

Now, Theorem ~\ref{edwards} does not say anything 
if $S_x(-\vmathbb{1})=-\infty$ for all $x\in X$. In certain situations, this may be mended by embedding the cone into a larger cone on which Theorem ~\ref{edwards} may be applied. 
\begin{proposition}
Let $g$ be lower semicontinuous, and suppose that $\mathcal{F}$ is a cone such that for all $K>0$,
\[
u \in \mathcal{F} \implies \max\{u-K, 0\} \in \mathcal{F}.
\]
If $g\geq 0$, then $S(g)=I(g)$. 
\end{proposition}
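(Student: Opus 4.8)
The plan is to reduce to Theorem~\ref{edwards} by enlarging $\mathcal{F}$ to a cone that contains the non-positive constants, so that the requirement $S_x(-\vmathbb{1}) > -\infty$ is met at every point, and then to verify that this enlargement changes neither $S_x(g)$ nor $I_x(g)$ once $g \geq 0$. The key point that makes the enlargement harmless is that $\mathcal{F}$ is closed under $u \mapsto \max\{u-K,0\}$.

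First I note that this closure hypothesis already forces $0 \in \mathcal{F}$ (assuming $\mathcal{F}$ nonempty): choosing any $u \in \mathcal{F}$ with $u \leq B$ and any $K \geq \max\{B,1\} > 0$, we have $u - K \leq 0$, whence $\max\{u-K,0\} \equiv 0$ lies in $\mathcal{F}$. I then set
\[
\tilde{\mathcal{F}} := \big\{ u + c \suchthat u \in \mathcal{F},\ c \leq 0 \big\},
\]
which is again a cone of upper bounded, upper semicontinuous functions. Since $0 \in \mathcal{F}$, it contains every non-positive constant, so that $-\vmathbb{1} \in \tilde{\mathcal{F}}$. Writing $\tilde S_x, \tilde I_x, \tilde M_x$ for the quantities associated with $\tilde{\mathcal{F}}$, we then have $\tilde S_x(-\vmathbb{1}) \geq -1 > -\infty$ for every $x$; as $g \geq 0$ is lower semicontinuous and bounded from below, Theorem~\ref{edwards} applies to $\tilde{\mathcal{F}}$ and yields $\tilde S_x(g) = \tilde I_x(g)$ for all $x \in X$.

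The heart of the matter is the identity $S_x(g) = \tilde S_x(g)$. The inequality $S_x(g) \leq \tilde S_x(g)$ is immediate from $\mathcal{F} \subseteq \tilde{\mathcal{F}}$. For the converse, let $w = u + c \in \tilde{\mathcal{F}}$ satisfy $w \leq g$; if $c < 0$ put $K := -c > 0$ and consider
\[
u' := \max\{u - K, 0\} = \max\{u + c, 0\} \in \mathcal{F}.
\]
From $u + c \leq g$ together with $0 \leq g$ we obtain $u' \leq g$, while $u'(x) \geq u(x) + c = w(x)$, so $w(x) \leq S_x(g)$; the case $c = 0$ is trivial since then $w \in \mathcal{F}$. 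Taking the supremum over all admissible $w$ gives $\tilde S_x(g) \leq S_x(g)$. This is precisely the step where both the closure hypothesis on $\mathcal{F}$ and the sign condition $g \geq 0$ are indispensable, and I expect it to be the main obstacle; the remainder is bookkeeping.

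Finally, every $\mu \in \tilde M_x$ obeys the defining inequality for each $u \in \mathcal{F} \subseteq \tilde{\mathcal{F}}$, so $\tilde M_x \subseteq M_x^{\mathcal{F}}$ and hence $I_x(g) \leq \tilde I_x(g)$. Chaining the relations,
\[
S_x(g) = \tilde S_x(g) = \tilde I_x(g) \geq I_x(g) \geq S_x(g),
\]
the last inequality being the universal bound $S_x \leq I_x$ noted before Theorem~\ref{edwards}. Thus all four quantities coincide and $S_x(g) = I_x(g)$ for every $x \in X$, that is, $S(g) = I(g)$.
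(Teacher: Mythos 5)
Your proof is correct and follows essentially the same route as the paper: both enlarge $\mathcal{F}$ to the cone generated by $\mathcal{F}$ and the non-positive constants, apply Theorem~\ref{edwards} to the enlarged cone, and use the closure under $u \mapsto \max\{u-K,0\}$ together with $g \geq 0$ to show the envelope $S_x(g)$ is unchanged. Your write-up is somewhat more explicit (e.g.\ verifying $0 \in \mathcal{F}$ and spelling out the chain of inequalities), but the argument is the same.
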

\begin{proof}
Define the cone $\mathcal{F}^*$, generated by $\mathcal{F}$ and $-\vmathbb{1}$, and denote  the corresponding operators by $S^*, I^*$. This way, we get more functions and fewer measures, i.e. $\mathcal{F} \subset \mathcal{F}^*$ and $M_x^{\mathcal{F}^*} \subset M_x^{\mathcal{F}}$, and thus we have
\[
S_x(g) \leq I_x(g) \leq  I^*_x(g)=S^*_x(g),
\]
since $\mathcal{F}^*$ satisfies the assumptions of Theorem ~\ref{edwards} at all points $x \in X$. Hence it is enough to show that $S_x(g)=S^*_x(g)$. However, any element in the defining envelope for $S^*g$ may be written $u-K\leq g$, where $u \in \mathcal{F}$ and $K> 0$. As $u-K \leq \max\{u-K,0 \} \in \mathcal{F}$, indeed $S_x(g)=S^*_x(g)$ for $g\geq 0$.
\end{proof}
\begin{proposition}
Let $g\leq 0$ be lower semicontinuous, and suppose that $\mathcal{F}$ is a cone of non-positive functions such that for all $K>0$,
\[
u \in \mathcal{F} \implies \max\{u, -K\} \in \mathcal{F}.
\]
If $g$ has a minorant in $\mathcal{F}$, then $S(g)=I(g)$. 
\end{proposition}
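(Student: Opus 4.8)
The plan is to follow the template of the two preceding propositions: enlarge $\mathcal{F}$ so that Theorem~\ref{edwards} applies, and then argue that the enlargement leaves the relevant envelope unchanged. Concretely, I would take $\mathcal{F}^*$ to be the cone generated by $\mathcal{F}$ and $-\vmathbb{1}$, whose elements have the form $u-t$ with $u\in\mathcal{F}$ and $t\ge 0$. Since $u\le 0$ and $t\ge 0$ these are again non-positive, while $-\vmathbb{1}\in\mathcal{F}^*$ gives $S^*_x(-\vmathbb{1})\ge -1>-\infty$ at every point; thus the hypotheses of Theorem~\ref{edwards} hold everywhere on $X$. The only genuinely new feature compared with the $g\ge 0$ case is that $g$ need not be bounded from below, so Theorem~\ref{edwards} cannot be applied to $g$ itself.

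To handle this I would first truncate. Set $g_K:=\max\{g,-K\}$, so that each $g_K$ is lower semicontinuous with $-K\le g_K\le 0$; if $m\in\mathcal{F}$ denotes the given minorant, then $\max\{m,-K\}\in\mathcal{F}$ is a minorant of $g_K$. Applying Theorem~\ref{edwards} to $\mathcal{F}^*$ and $g_K$ yields $S^*_x(g_K)=I^*_x(g_K)$, and since $\mathcal{F}\subset\mathcal{F}^*$ forces $M^{\mathcal{F}^*}_x\subset M^{\mathcal{F}}_x$ we obtain
\[
S_x(g_K)\le I_x(g_K)\le I^*_x(g_K)=S^*_x(g_K).
\]
Exactly as in the preceding proof, it therefore suffices to establish the reverse inequality $S^*_x(g_K)\le S_x(g_K)$ for each fixed $K$.

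This lifting step is where I expect the real difficulty, and it is the analogue of the observation $u-K\le\max\{u-K,0\}\in\mathcal{F}$ used before. Given a competitor $u-t\le g_K$ with $u\in\mathcal{F}$, $t\ge 0$, the case $t\ge K$ is immediate, since $u(x)-t\le -K\le\max\{m,-K\}(x)\le S_x(g_K)$. For $0\le t<K$ the natural replacement is $w:=\max\{u-t,\max\{m,-K\}\}$, which is $\le g_K$ (both entries are) and satisfies $w(x)\ge u(x)-t$; the task is to certify $w\in\mathcal{F}$ using only that $\mathcal{F}$ is a cone closed under truncation from below. The obstacle is precisely that the constant $t$ has shifted $u$ out of $\mathcal{F}$: one would try to reabsorb it via the identity $\max\{u-t,-K\}=\max\{u,-(K-t)\}-t$, where $\max\{u,-(K-t)\}\in\mathcal{F}$, but this still carries the constant $-t$, so returning to $\mathcal{F}$ genuinely requires pairing the truncation with the minorant (and, in the plurisubharmonic cones motivating this section, the stability of $\mathcal{F}$ under pairwise maxima, which makes $w\in\mathcal{F}$ transparent). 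Making this shift-absorption rigorous from the cone axioms is the crux of the argument.

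Finally I would let $K\to\infty$. From $S_x(g)\le I_x(g)\le I_x(g_K)=S_x(g_K)$ it is enough to verify $S_x(g_K)\searrow S_x(g)$. I would obtain this by extracting a weak$^*$-limit of the optimal measures $\mu_K\in C^{\mathcal{F}^*}_x$, which all have mass at most $1$ and so lie in a single weak$^*$-compact set, and then identifying the limit using the lower semicontinuity of $g$ together with the minorant to control integrability. This interchange of the limit in $K$ with the infimum over measures is the secondary technical point; the remainder is bookkeeping parallel to the $g\ge 0$ case.
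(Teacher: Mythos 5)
Your overall strategy (pass to the cone $\mathcal{F}^*$ generated by $\mathcal{F}$ and $-\vmathbb{1}$, apply Theorem~\ref{edwards} there, and reduce everything to the identity $S^*_x=S_x$) matches the paper's, but you leave the decisive step open --- and it is exactly the step you yourself flag as ``the crux.'' Given a competitor $u-t\le g$ with $u\in\mathcal{F}$, $t\ge 0$, you try to reabsorb the constant by forming $\max\{u-t,\max\{m,-K\}\}$ and correctly observe that nothing in the hypotheses puts this function back into $\mathcal{F}$: the cone is closed under addition, nonnegative scalars and the truncations $u\mapsto\max\{u,-K\}$, but not under pairwise maxima. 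The paper closes this gap with a different, one-line device: for any $s\in\mathcal{F}$ with $s\le g$ (the assumed minorant suffices), one has
\[
u-t\;\le\; u+\max\{s,-t\}\;\le\; g,
\]
the first inequality because $\max\{s,-t\}\ge -t$, the second by checking the two cases $s\ge -t$ (where $u+\max\{s,-t\}=u+s\le s\le g$, since $u\le 0$) and $s<-t$ (where it equals $u-t\le g$ by assumption). Since $\max\{s,-t\}\in\mathcal{F}$ by hypothesis and the cone is closed under sums, $u+\max\{s,-t\}\in\mathcal{F}$, giving $S^*_x(g)\le S_x(g)$ directly. Replacing your max by this sum is precisely the ``shift-absorption'' you were missing; without it (or some substitute) the argument does not go through.

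A secondary remark: your truncation $g_K=\max\{g,-K\}$ and the subsequent weak$^*$ limiting argument address a genuine point --- Theorem~\ref{edwards} is stated for $g$ bounded from below, whereas here $g$ may take the value $-\infty$, and the paper's own proof applies the chain $S_x(g)\le I_x(g)\le I^*_x(g)=S^*_x(g)$ without comment on this. But as written your limiting step ($S_x(g_K)\searrow S_x(g)$ via a weak$^*$ limit of near-optimal measures) is only sketched, and in any case it does not repair the missing lifting step above, which is where the proof actually fails.
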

\begin{proof}
Again, $S(g)$ and $S^*(g)$ coincide, since 
\[
u - K \leq u+ \max\{s, -K\} \leq g
\]
where $u-K$ belongs to the envelope of $S^*(g)$ and $s$ belongs to the envelope of $S(g)$.
\end{proof}
Theorem ~\ref{edwards} will be our main tool in proving the following lemma.
\begin{lemma}\label{uppat-begransad}
Let $\Omega$ be a bounded B-regular domain, and let $\phi$ be a function bounded from above such that $\phi^*=\phi_*$ on $\bar \Omega$. Further assume that $\phi$ has a strong minorant $v$, and that there exists an open set $U_v \subset \Omega$ where $v$ is continuous. Then the envelope
 \[
 P\phi(z) = \sup \big \{ u(z) \suchthat u \in \psh(\Omega), u^* \le \phi \big\}
 \]
is continuous on $U_v$ as well.
\end{lemma}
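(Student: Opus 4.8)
The plan is to reduce $P\phi$ to a Jensen-measure representation via the generalized Edwards theorem and then prove lower semicontinuity on $U_v$ by a weak$^*$-compactness argument, in which the continuity of $v$ is what controls the limiting measure near the poles of $\phi$. First some normalizations: since $\phi$ is bounded above I may subtract a constant and assume $\phi \le 0$, and arguing as in the reduction $v \ge \phi > 0$ of Proposition~\ref{nedat-begransad} (applied to $-\phi$ and the strong majorant $-v$) I may further assume $v \le \phi \le 0$, with $v$ still a strong minorant continuous on $U_v$. As $\phi$ is upper semicontinuous and every competitor satisfies $u \le u^* \le \phi$, the regularized supremum $(P\phi)^*$ is again a competitor, so $P\phi = (P\phi)^*$ is plurisubharmonic and hence upper semicontinuous on $\Omega$; moreover $P\phi \ge v$ gives $P\phi(z_0) \ge v(z_0) > -\infty$ for $z_0 \in U_v$. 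It therefore remains to establish lower semicontinuity on $U_v$.

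To that end I take $\mathcal{F} = \set{u \in \psh(\Omega) \cap \usc(\bar\Omega) \suchthat u \le 0}$, a cone of non-positive upper semicontinuous functions containing the constant $-1$ and closed under $u \mapsto \max\set{u,-K}$, in which $v$ is a minorant of the lower semicontinuous function $\phi$. The preceding proposition for non-positive $g$ then gives $P\phi(z) = S_z(\phi) = I_z(\phi)$, where $S_z(\phi)=P\phi(z)$ because the constraint $u \le 0$ is forced by $u \le \phi$ and replacing $u$ by $u^*$ identifies the usc envelope with $P\phi$; by Theorem~\ref{edwards} the infimum defining $I_z(\phi)$ may be taken over the weak$^*$-compact set $C_z$ of Jensen measures $\mu \in M_z^{\mathcal{F}}$ with $\mu(\bar\Omega)\le 1$.

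Now fix $z_0 \in U_v$ and $z_j \to z_0$, which I may assume lie in $U_v$, and choose near-optimal $\mu_j \in C_{z_j}$ with $\int \phi\,d\mu_j \le P\phi(z_j) + 1/j$; the uniform mass bound yields a weak$^*$ limit $\mu$. For continuous $u \in \psh(\Omega)\cap C(\bar\Omega)$ the inequality $u(z_j) \le \int u\,d\mu_j$ passes to the limit, giving $u(z_0) \le \int u\,d\mu$, and approximating an arbitrary $u \in \mathcal{F}$ from above by continuous plurisubharmonic functions (available since $\Omega$ is B-regular) extends this to all of $\mathcal{F}$; hence $\mu \in M_{z_0}^{\mathcal{F}}$ and $\int \phi\,d\mu \ge I_{z_0}(\phi) = P\phi(z_0)$. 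The crux is to transfer this to the sequence, i.e.\ to show $\liminf_j \int \phi\,d\mu_j \ge \int \phi\,d\mu$, and here the continuity of $v$ enters: since $v \in \mathcal{F}$, the Jensen inequality and continuity of $v$ at $z_0$ give $\int (-v)\,d\mu_j \le -v(z_j) \le C < \infty$ uniformly, and likewise $\int(-v)\,d\mu \le -v(z_0)$. Because $v$ is a strong minorant with $v \le \phi \le 0$, on $\set{\phi < -K}$ one has $\abs{\phi} \le \eta_K\abs{v}$ with $\eta_K \to 0$, so the truncations $\phi^{(K)} := \max\set{\phi,-K}$ obey $0 \le \int(\phi^{(K)}-\phi)\,d\nu \le \eta_K C$ uniformly for $\nu \in \set{\mu_j}\cup\set{\mu}$. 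Splitting $\int\phi\,d\mu_j$ accordingly and using that $\phi^{(K)}$ is continuous, I get $\liminf_j \int\phi\,d\mu_j \ge \int\phi^{(K)}\,d\mu - \eta_K C \ge \int\phi\,d\mu - \eta_K C$ for every $K$; letting $K \to \infty$ gives $\liminf_j \int\phi\,d\mu_j \ge \int\phi\,d\mu \ge P\phi(z_0)$, whence $\liminf_j P\phi(z_j) \ge P\phi(z_0)$.

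I expect this last transfer to be the main obstacle. For $\phi$ merely upper semicontinuous and unbounded below the map $\nu \mapsto \int\phi\,d\nu$ is only upper semicontinuous in the weak$^*$ topology, so a priori the masses of $\mu_j$ could concentrate on the pole set of $\phi$ and pull $\liminf_j \int\phi\,d\mu_j$ below $\int\phi\,d\mu$. The strong minorant supplies exactly the uniform integrability that rules this out, but only where $v$ is finite and continuous — which is precisely why the conclusion is confined to $U_v$.
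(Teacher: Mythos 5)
Your route is genuinely different from the paper's: you apply the Edwards-type duality directly to the unbounded function $\phi$ and then run a weak$^*$-compactness argument on near-minimizing Jensen measures, whereas the paper divides every competitor by $\abs{\phi}$, forms the cones of functions $(u/\abs{\phi})^*$, and only ever invokes Theorem~\ref{edwards} for the bounded continuous function $-\vmathbb{1}$, the unboundedness of $\phi$ being absorbed into the cone. The step you single out as the crux --- the uniform bound $\int(-v)\,d\mu_j\le -v(z_j)\le C$ supplied by the strong minorant, and the resulting transfer $\liminf_j\int\phi\,d\mu_j\ge\int\phi\,d\mu$ via the truncations $\phi^{(K)}$ --- is correct, and is a legitimate substitute for the paper's observation that Jensen measures for a cone containing $(v/\abs{\phi})^*\le-\vmathbb{1}$ cannot charge the pole set of $\phi$.

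The genuine gap sits one step earlier, in the identity $P\phi(z_j)=I_{z_j}(\phi)$ on which the whole scheme rests. You justify it by citing the proposition for non-positive $g$, but the proof of that proposition only establishes $S(g)=S^*(g)$ and then still needs $I^*(g)=S^*(g)$, which comes from Theorem~\ref{edwards} and therefore requires $g$ to be bounded from below; your $g=\phi$ is not. (This is presumably why the paper's own proof never applies the duality to $\phi$ itself.) The gap is repairable exactly where you need it, namely at points where $v$ is finite: apply Theorem~\ref{edwards} to $\phi^{(K)}=\max\set{\phi,-K}$ to produce $\mu\in M_{z_j}^{\mathcal{F}}$ with $\int\phi\,d\mu\le\int\phi^{(K)}\,d\mu\le S_{z_j}(\phi^{(K)})+\eps$, and then show $S_{z_j}(\phi^{(K)})\searrow S_{z_j}(\phi)=P\phi(z_j)$. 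Indeed, if $u\in\mathcal{F}$ and $u\le\phi^{(K)}$, then $u+\eta_K v\le\phi$: on $\set{\phi\ge-K}$ because $u\le\phi$ and $\eta_K v\le 0$, and on $\set{\phi<-K}$ because there $u\le-K\le 0$ while $\eta_K v\le\phi$ by the strong-minorant estimate $\abs{\phi}\le\eta_K\abs{v}$. Hence $S_{z_j}(\phi^{(K)})\le P\phi(z_j)-\eta_K v(z_j)\to P\phi(z_j)$ since $v(z_j)$ is finite. With this inserted, your argument closes.
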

\begin{proof}
The argument will consist of showing that for $z \in U_v$,
\[
\sup \big\{ u(z) \suchthat u \in \psh(\Omega), u^* \le \phi \big\}=\sup \big\{ u(z) \suchthat u \in \psh(\Omega)\cap C(U_v), u^* \le \phi \big\}.
\]
Since $\phi$ is bounded from above, we may without loss of generality assume that $\phi<0$, and it is enough to consider the set $\psh^-(\Omega)$ of negative plurisubharmonic functions. Define the cones
\begin{s}
\mathcal{F}&=\bigg\{ \Big(\frac{u}{|\phi|}\Big)^* \suchthat u \in \psh^-(\Omega) \cap \usc(\bar \Omega) \bigg \} \\
\mathcal{F}_v&=\bigg\{ \Big(\frac{u}{|\phi|}\Big)^* \suchthat u \in \psh^-(\Omega)\cap \usc(\bar \Omega)\cap C(U_v)\bigg\},
\end{s}
and denote the corresponding operators by $S_z, I_z, S_z^v, I_z^v$, and the corresponding set of Jensen measures by $J_z \subset J_z^v$. Since $\phi$ is continuous on $U_v$, it follows that all elements in $\mathcal{F}_v$ are continuous on $U_v$ as well. As in the proof of Theorem ~\ref{tam-majorant}, we may assume that $v < \phi$, and thus both cones contains the element
\[
\Big(\frac{v}{|\phi|}\Big)^* \leq -\vmathbb{1}.
\]
Furthermore, by Definition ~\ref{majmin}, we necessarily have
\[
\left.\Big(\frac{v}{|\phi|}\Big)^*\right|_{P} = -\infty,
\]
where $P := \{z \suchthat \phi(z)=-\infty\}$. Now fix $z_0 \in U_v$ and pick any measure $\mu \in J^v_{z_0}$. Since 
\[
\int_{P} \Big(\frac{v}{|\phi|}\Big)^*\,d\mu \geq \int_{\bar \Omega} \Big(\frac{v}{|\phi|}\Big)^*\,d\mu \geq \frac{v(z_0)}{|\phi(z_0)|} > -\infty,
\]
we conclude that $\mu(P)=0$. On the other hand, for all $u \in \psh^-(\Omega),$
\[
\Big(\frac{u}{|\phi|}\Big)^* = \frac{u}{|\phi|}
\]
on the set $\bar \Omega \setminus P$. For a specific such $\big(\frac{u}{|\phi|}\big)^* \in \mathcal{F}$, we may by Wikström's approximation theorem \cite{wikstrom} find a sequence $v_k \in \psh^-(\Omega) \cap C(\bar \Omega)$ such that $v_k \searrow u$ on $\bar \Omega$. By previous considerations,
\[
\Big(\frac{v_n}{|\phi|}\Big)^* \searrow \Big(\frac{u}{|\phi|}\Big)^*
\]
on $\Bar \Omega \setminus P$, and thus
\begin{alignat*}{2}
\int_{\bar \Omega} \Big(\frac{u}{|\phi|}\Big)^*\,d\mu &= \int_{\bar \Omega \setminus P} \Big(\frac{u}{|\phi|}\Big)^*\,d\mu &&= \lim_{k\rightarrow \infty} \int_{\bar \Omega \setminus P} \Big(\frac{v_k}{|\phi|}\Big)^*\,d\mu \\
&\geq  \lim_{k\rightarrow \infty} \Big(\frac{v_k(z_0)}{|\phi(z_0)|}\Big)^*  &&= \Big(\frac{u(z_0)}{|\phi(z_0)|}\Big)^*.
\end{alignat*}
We conclude that $J_{z_0} = J_{z_0}^v$. Theorem~\ref{edwards} implies that
\[
S_{z_0}(-\vmathbb{1})=I_{z_0}(-\vmathbb{1})=I^v_{z_0}(-\vmathbb{1})=S^v_{z_0}(-\vmathbb{1}).
\]
The lemma now follows from the fact that
\begin{s}
P\phi(z_0)&=\sup \big\{ u(z_0) \suchthat u \in \psh(\Omega), u^* \le \phi \big\} \\
&= \sup \Big\{ u(z_0) \suchthat u \in \psh^-(\Omega)\cap \usc(\bar \Omega), \Big(\frac{u}{|\phi|}\Big)^* \le -1 \Big\} \\
&= |\phi(z_0)| S_{z_0}(-\vmathbb{1}),
\end{s}
and similarly for the envelope over plurisubharmonic functions continuous on $U_v$.
\end{proof}
\begin{remark}
A similar argument was recently used by Bracci et al. \cite{bracci1} concerning the continuity of the pluricomplex Poisson kernel. 
\end{remark}
It is straightforward to extend this result to the whole set $\{z \in \Omega \suchthat v_*(z) \neq -\infty\}$.
\begin{lemma}\label{lokalkont}
Let $v$ be a nontrivial strong minorant to $\phi$, and let $z_0 \in \Omega$ be such that $v_*(z_0) \neq -\infty$. Then $\phi$ has a nontrivial strong minorant $\tilde v$ which is continuous in a neighborhood of $z_0$.  
\end{lemma}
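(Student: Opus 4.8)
The plan is to leave $v$ unchanged away from $z_0$, so as to retain its poles, and to replace it near $z_0$ by the maximal plurisubharmonic function with the same boundary data. Since $v_*(z_0) \neq -\infty$ and $v \in \usc(\bar\Omega)$, the function $v$ is bounded on some ball $B = B(z_0, 2r) \Subset \Omega$, say $\abs{v} \le M$ on $\bar B$. In particular $\phi > -\infty$ on $\bar B$, so that both the poles of $\phi$ and the set $\set{v = -\infty}$ lie in $\Omega \setminus B$. The mechanism that lets us modify $v$ freely on $B$ is the observation, already used in Proposition~\ref{nedat-begransad}, that \emph{every} $w \in \psh(\Omega) \cap \usc(\bar\Omega)$ with $v \le w \le v + C$ is again a nontrivial strong minorant to $\phi$: unwinding Definition~\ref{majmin}, the two requirements for $-w$ to be a strong majorant to $-\phi$ follow from $-w \geq -v - C$ together with the corresponding properties of $-v$, and $w \not\equiv -\infty$ because $w \geq v$. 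Hence it is enough to exhibit such a $w$ that is continuous on a neighbourhood of $z_0$.

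I would take $\hat v$ to be the Perron--Bremermann envelope on $B$ of the boundary values $v|_{\bd B}$, i.e.\ the maximal function solving $(dd^c \hat v)^n = 0$ in $B$ with $\hat v^* \le v$ on $\bd B$. As $v$ itself belongs to the defining family we get $v \le \hat v$, while maximality and $\abs v \le M$ force $\hat v \le \sup_{\bd B} v \le M \le v + 2M$; thus $v \le \hat v \le v + 2M$ on $B$. Setting $\tilde v := \hat v$ on $B$ and $\tilde v := v$ on $\Omega \setminus B$, the gluing is plurisubharmonic since $\limsup_{B \ni z \to \zeta} \hat v(z) = \hat v^*(\zeta) \le v(\zeta)$ for every $\zeta \in \bd B$; so $\tilde v \in \psh(\Omega) \cap \usc(\bar\Omega)$ and, by the first paragraph with $C = 2M$, $\tilde v$ is a nontrivial strong minorant to $\phi$.

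The crux is then to prove that $\tilde v = \hat v$ is continuous near $z_0$, and here lies the main obstacle: the datum $v|_{\bd B}$ is only upper semicontinuous, and for such data the maximal extension can fail to be continuous in the interior, so Walsh's theorem does not apply off the shelf. The natural route is to approximate $v|_{\bd B}$ from above by continuous functions $f_j \searrow v|_{\bd B}$ and to solve the homogeneous Monge--Amp\`ere equation on the B-regular ball $B$ for each $f_j$; by the continuity of the solution for continuous data (see~\cite{wikstrom}) each envelope $\hat v_j$ is continuous on $\bar B$, and comparison gives $\hat v_j \searrow \hat v$. This already yields upper semicontinuity of $\hat v$ for free, so everything reduces to the \emph{lower} semicontinuity of $\hat v$ at $z_0$, equivalently to the local uniformity of the convergence $\hat v_j \to \hat v$ there. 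I expect establishing this to be the technical heart of the argument; the facts that $\hat v$ is bounded and maximal on $B$ and that the poles have been pushed into $\Omega \setminus B$ are exactly what one would exploit, either through a Dini-type estimate or by constructing continuous plurisubharmonic lower barriers at $z_0$ from the B-regularity of $\Omega$. Once continuity of $\tilde v$ on some neighbourhood $U_{\tilde v} \ni z_0$ is secured the lemma is proved, and Lemma~\ref{uppat-begransad} applied on $U_{\tilde v}$ then delivers the continuity of $P\phi$ near $z_0$ that motivates the statement.
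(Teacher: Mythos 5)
There is a genuine gap, and you have in fact located it yourself: the entire content of the lemma is to produce a strong minorant that is \emph{continuous} near $z_0$, and your construction leaves exactly that step unproven. Your first two paragraphs are fine — the observation that any $w \in \psh(\Omega)\cap\usc(\bar\Omega)$ with $v \le w \le v + C$ is again a nontrivial strong minorant is correct and is the right reduction, and the gluing of $\hat v$ with $v$ across $\bd B$ is legitimate. But the Perron--Bremermann envelope of the merely upper semicontinuous datum $v|_{\bd B}$ is only a \emph{decreasing} limit of the continuous envelopes $\hat v_j$, hence upper semicontinuous and nothing more; bounded maximal plurisubharmonic functions on a ball can be discontinuous (already the relative extremal function of a badly-placed compact subset of $\bd B$ gives such examples), and I see no reason why continuity should hold even at the single point $z_0$. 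A Dini or barrier argument is not sketched in enough detail to judge, so as written the proof does not close.

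The paper's own proof sidesteps the Dirichlet problem entirely with a much more elementary device. First replace $v$ by $v + K\abs{z - z_0}$ for $K$ large, which is still plurisubharmonic and still a strong minorant (it differs from $v$ by a bounded nonnegative function, so your own first-paragraph criterion applies), and which satisfies $v|_{\bd B} > v(z_0) + \eps_2$ on the boundary of a small ball $\bar B(z_0,\eps_1) \subset \{v_* \neq -\infty\}$. Then set $\tilde v = \max\{v, v(z_0)+\eps_2\}$ on $\bar B$ and $\tilde v = v$ outside; the boundary inequality makes the gluing plurisubharmonic, and upper semicontinuity of $v$ at $z_0$ gives a neighborhood $U_{z_0}$ on which $v < v(z_0) + \eps_2$, so that $\tilde v$ is \emph{constant} — in particular continuous — on $U_{z_0}$. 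The moral is that one does not need a continuous \emph{solution} of any equation near $z_0$, only a continuous competitor, and a constant suffices once the cone $K\abs{z-z_0}$ has lifted $v$ on $\bd B$. I would rework your argument along these lines rather than trying to establish interior continuity of the envelope for upper semicontinuous boundary data.
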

\begin{proof}
Fix a closed ball $\bar B:= \bar B(z_0, \varepsilon_1) \subset \{z \suchthat v_*(z_0) \neq -\infty\}$. By adding $K|z-z_0|$, where $K>0$ is sufficiently large, we may assume that $v|_{\bd B} > v(z_0)+\varepsilon_2$. Since $v$ is upper semicontinuous at $z_0$, we may further find an open neighborhood $U_{z_0} \subset B$ such that $v|_{U_{z_0}} < v(z_0) + \varepsilon_2$. By the standard gluing lemma,
\[
\tilde v = \begin{cases}
    \max\{v(z) ,v(z_0)+\varepsilon_2\}, & \hfill \text{ if } z\in \bar B,   \\
    v(z),  & \hfill \text{ otherwise,} 
    \end{cases}
\]
is plurisubharmonic, continuous on $U_{z_0}$ and clearly a nontrivial minorant to $\phi$.
\end{proof}
We are now ready to formulate our main theorem.

\begin{theorem}\label{obegransad}
Let $\Omega$ be a bounded B-regular domain, and let $\phi$ be a function having a strong minorant $v$ and a strong majorant $w$ such that $\phi^*=\phi_*$ on $\bar \Omega$. Then the envelope
\[
 P\phi(z) = \sup \big\{u(z) \suchthat u \in \psh(\Omega), u^* \le \phi \big\}
\]
is continuous on $\{z \in \bar\Omega \suchthat v_*(z) \neq -\infty\} \cap \{z \in \bar\Omega \suchthat w^*(z) \neq +\infty\}$.
\end{theorem}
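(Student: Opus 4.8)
The plan is to deduce the theorem from the two one‑sided results already established, controlling the positive and negative singularities separately and then gluing. After the normalisations permitted by Definition~\ref{majmin}, I would assume $v \le \phi \le w$ on $\Omega$; since $w \in -\psh(\Omega) \cap \lsc(\bar\Omega)$ is lower semicontinuous on the compact set $\bar\Omega$ it is bounded below, so after adding a non‑negative constant (which alters neither the strong majorant property nor the set $\{w^* \neq +\infty\}$, and preserves $w \ge \phi$) I may also assume $w \geq 0$. Write $G := \{v_* \neq -\infty\} \cap \{w^* \neq +\infty\}$ for the target set, and note that $G$ is relatively open in $\bar\Omega$, since $v_*$ is lower semicontinuous and $w^*$ is upper semicontinuous.

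For each $N > 0$ I consider the upper truncation $\phi^N := \min\{\phi, N\}$. This function is bounded from above, satisfies $(\phi^N)^* = \min\{\phi^*, N\} = \min\{\phi_*, N\} = (\phi^N)_*$, and retains $v$ as a strong minorant, since truncating from above leaves the negative poles untouched. Hence, for any interior point $z_0$ with $v_*(z_0) \neq -\infty$, Lemma~\ref{lokalkont} furnishes a strong minorant of $\phi^N$ continuous near $z_0$, and Lemma~\ref{uppat-begransad} then gives continuity of $P\phi^N$ in a neighbourhood of $z_0$. Thus each $P\phi^N$ is continuous on $\{v_* \neq -\infty\} \cap \Omega \supset G \cap \Omega$, and the family increases with $N$.

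The crux is to verify that $P\phi = \sup_N P\phi^N$ on $\{w^* \neq +\infty\}$. The inequality $\sup_N P\phi^N \le P\phi$ is immediate from monotonicity. For the reverse, fix $z_0 \in \{w^* \neq +\infty\}\cap\Omega$, so that $w(z_0) \le w^*(z_0) < +\infty$, and let $u \in \psh(\Omega)$ with $u^* \le \phi$ be arbitrary. Exactly as in the proof of Proposition~\ref{nedat-begransad}, the strong majorant property of $w$ provides for each $\varepsilon > 0$ a constant $k_\varepsilon$ with $\varepsilon w + k_\varepsilon > \phi$; consequently $u - \varepsilon w$ is plurisubharmonic, satisfies $(u - \varepsilon w)^* \le u^* \le \phi$ (using $w \geq 0$), and is bounded above by $k_\varepsilon$. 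Taking $N \geq k_\varepsilon$, the function $u - \varepsilon w$ competes in the definition of $P\phi^N$, whence $(u-\varepsilon w)(z_0) \le P\phi^N(z_0) \le \sup_N P\phi^N(z_0)$. Letting $\varepsilon \to 0$ and using $w(z_0) < +\infty$ yields $u(z_0) \le \sup_N P\phi^N(z_0)$, and a supremum over $u$ gives $P\phi(z_0) \le \sup_N P\phi^N(z_0)$.

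It then remains to assemble the pieces. On $G \cap \Omega$ the identity $P\phi = \sup_N P\phi^N$ exhibits $P\phi$ as an increasing supremum of continuous functions, hence lower semicontinuous there; on $\Omega \cap \{w^* \neq +\infty\}$ the competitors are locally bounded above by $w$, so Brelot--Cartan shows $P\phi$ is plurisubharmonic and in particular upper semicontinuous, giving continuity on $G \cap \Omega$. For boundary points $z_0 \in G \cap \bd\Omega$, the local boundedness of $\phi$ (from $v$ bounded below and $w$ bounded above near $z_0$) together with $\phi^* = \phi_*$ reduces matters to the attainment of continuous boundary data, which follows from the B‑regularity of $\Omega$ by the standard barrier construction, as in Proposition~\ref{nedat-begransad}. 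I expect the main obstacle to lie in the crux step: making precise that the strong majorant lets one approximate an arbitrary, possibly upper‑unbounded, competitor from below by upper‑bounded ones \emph{without} the global lower boundedness hypothesis of Proposition~\ref{nedat-begransad}, and in checking that the boundary analysis genuinely localises on $G$.
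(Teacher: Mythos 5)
Your proposal is correct and follows essentially the same route as the paper: both handle the positive singularities by subtracting $\varepsilon w$ (the trick from Proposition~\ref{nedat-begransad}) to pass to upper-bounded competitors at points where $w$ is finite, and both invoke Lemmas~\ref{uppat-begransad} and~\ref{lokalkont} to handle the negative singularities, concluding lower semicontinuity from a supremum of (locally) continuous functions and upper semicontinuity via Brelot--Cartan. The only organizational difference is that you truncate the obstacle, setting $\phi^N=\min\{\phi,N\}$ and proving $P\phi=\sup_N P\phi^N$ on $\{w^*\neq+\infty\}$, which lets you apply Lemma~\ref{uppat-begransad} as a black box, whereas the paper applies the argument of that lemma directly to the envelope over $\psh(\Omega)\cap\usc(\bar\Omega)$ competitors; the substance is the same.
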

\begin{proof}
Assume without loss of generality that $v \leq P\phi \leq w$, and that the set $\{z \in \Omega \suchthat v_*(z) \neq -\infty\} \cap \{z \in \Omega \suchthat w^*(z) \neq +\infty\}$ contains an element $z_0$. By the proof of Proposition~\ref{nedat-begransad}, $P \phi$ is upper semicontinuous on $\Omega$, and
\[
P\phi(z) = \sup \big\{u(z) \suchthat u \in \psh(\Omega) \cap \usc(\bar \Omega), u \le \phi \big\}
\]
for $z$ such that $w(z) \neq +\infty$. As all functions in this envelope are bounded, Lemma~\ref{uppat-begransad} and Lemma ~\ref{lokalkont} implies that it is enough to take the pointwise supremum over functions continuous on some neighborhood $U_{z_0}$ of $z_0$. In particular, these functions are continuous on $U_{z_0} \cap \{z \in \bar \Omega \suchthat w^*(z) \neq +\infty\}$, and it follows that $P\phi$ is lower semicontinuous at $z_0$.
\end{proof}
\begin{remark}
This theorem is a direct generalization of a classic result in pluripotential theory due to J. B. Walsh \cite{walsh}.
\end{remark}

\end{document}